\documentclass[english]{article}
\usepackage[T1]{fontenc}
\usepackage[utf8]{inputenc}
\usepackage{geometry}
\geometry{verbose}
\usepackage{mathtools}
\usepackage{amsmath}
\usepackage{amsthm}
\usepackage{amssymb}
\usepackage{hyperref}
\usepackage{xcolor}

\newcommand{\m}[1]{\mathbb{#1}}
\newcommand{\q}[1]{\mathcal{#1}}
\renewcommand{\le}{\leqslant}
\renewcommand{\ge}{\geqslant}

\makeatletter
\theoremstyle{plain}
\newtheorem{thm}{\protect\theoremname}
\theoremstyle{definition}
\newtheorem{defn}[thm]{\protect\definitionname}
\theoremstyle{remark}
\newtheorem{rem}[thm]{\protect\remarkname}
\theoremstyle{plain}
\newtheorem{prop}[thm]{\protect\propositionname}
\theoremstyle{plain}
\newtheorem{lem}[thm]{\protect\lemmaname}

\makeatother

\usepackage{babel}

\providecommand{\definitionname}{Definition}
\providecommand{\lemmaname}{Lemma}
\providecommand{\propositionname}{Proposition}
\providecommand{\remarkname}{Remark}
\providecommand{\theoremname}{Theorem}

\begin{document}

\title{Improved uniqueness of multi-breathers of the modified Korteweg-de Vries
equation}
\author{Raphaël Côte and Alexander Semenov}
\maketitle

\begin{flushright}
\emph{Dedicated to Professor Carlos Kenig on the occasion of his 70th birthday}
\end{flushright}

\bigskip

\begin{abstract}
We consider multi-breathers of \eqref{mKdV}. In \cite{key-49}, a smooth multi-breather was constructed, and proved to be unique in two cases: first, if the class of super-polynomial convergence to the profile (in the spirit of \cite{key-61}), and second, under the assumption that all speeds of the breathers involved are positive (without rate of convergence).

The goal of this short note is to improve the second result: we show that uniqueness still holds
if at most one velocity is negative or zero.
\end{abstract}

\section{Setting of the problem}

\subsection{The modified Korteweg-de Vries equation}


We consider the modified Korteweg-de Vries equation on $\mathbb{R}$:
\begin{equation}
\tag{mKdV}\begin{cases}
\begin{array}{lc}
u_{t}+(u_{xx}+u^{3})_{x}=0 & \quad(t,x)\in\mathbb{R}^{2}\\
u(0)=u_{0} & \quad u(t,x)\in\mathbb{R}
\end{array}\end{cases}\label{mKdV}
\end{equation}

The \eqref{mKdV} equation appears as a model
of some physical problems as  plasma physics, electrodynamics \cite{key-43}, fluid mechanics, ferromagnetic vortices, and more; we refer to \cite{key-49} for further information about the physical applications. Let us recall that
\eqref{mKdV} is globally well-posed for any initial data in $H^{2}$ (see \cite{key-19} for much stronger results), and for such data, three quantities are conserved in time:
\begin{align}
\text{the $L^2$ mass} \qquad 
M[u](t)& :=\frac{1}{2}\int u^{2}(t,x) dx, \\
\text{the energy} \qquad
E[u](t) & := \int \left( \frac{1}{2} u_{x}^{2}(t,x) - \frac{1}{4} u^{4}(t,x)\right) dx, \\
\text{the second energy} \qquad 
F[u](t)& :=\int \left( \frac{1}{2} u_{xx}^{2}(t,x) -\frac{5}{2} u^{2}(t,x)u_{x}^{2}(t,x) +\frac{1}{4} u^{6}(t,x) \right)  dx.
\end{align}
Finally, \eqref{mKdV} is an integrable system, and there are (at least for smooth solutions) infinitely many conservation laws, but we point out that we will only use $H^2$ regularity and the above conservation laws.

\subsection{Solitons and breathers of \texorpdfstring{\eqref{mKdV}}{mKdV}}

The special ``basic'' solutions of (\ref{mKdV}) that we consider
here are solitons and breathers.

\begin{defn}
Let $c>0$, $\kappa\in\{-1,1\}$ and $x_{0}\in\mathbb{R}$. A soliton
$R_{c,\kappa}(x_{0})$ of shape parameter (or velocity) $c$, of sign
$\kappa$ and of translation parameter (or initial position) $x_{0}$
is a solution of (\ref{mKdV}), given by the following formula:
\begin{equation}
\forall (t,x) \in \m R^2, \quad R_{c,\kappa}(t,x;x_{0}):=\kappa Q_{c}(x-x_{0}-ct),
\end{equation}
where $Q_{c}$ is defined by the following formula:
\begin{equation}
\forall x \in \m R, \quad Q_{c}(x):=\left(\frac{2c}{\cosh^{2}(c^{1/2}x)}\right)^{\frac{1}{2}}.
\end{equation}
\end{defn}
\begin{rem}
There exists a constant
$C>0$ that depends only on $c$ such that 
\begin{equation}
\forall (t,x)\in\mathbb{R}^{2}, \quad \lvert R_{c,\kappa}(t,x;x_{0})\rvert\le C\exp(-\sqrt{c}\lvert x-x_{0}-ct\rvert).
\end{equation}
Further properties of solitons (in particular, their $H^2$ variational structure) can be found in \cite{key-49}.
\end{rem}
\begin{defn}
Let $\alpha,\beta>0$ and $x_{1},x_{2}\in\mathbb{R}$. A breather
$B_{\alpha,\beta}(x_{1},x_{2})$ of shape parameters $\alpha,\beta$
and of translation parameters $x_{1},x_{2}$  a solution of (\ref{mKdV}), given by the following
formula: 
\begin{equation}
\forall (t,x)\in\mathbb{R}^{2}, \quad B_{\alpha,\beta}(t,x;x_{1},x_{2}):=2\sqrt{2}\partial_{x}\left[\arctan\left(\frac{\beta}{\alpha}\frac{\sin(\alpha y_{1})}{\cosh(\beta y_{2})}\right)\right],
\end{equation}
where
\begin{align}
y_{1} :=x+\delta t+x_{1}    \quad & \text{and}  \quad y_{2} :=x+\gamma t+x_{2},\\
\text{with}\quad\delta :=\alpha^{2}-3\beta^{2}  \qquad & \text{and} \quad \ \gamma :=3\alpha^{2}-\beta^{2}.
\end{align}
The velocity of $B_{\alpha,\beta}(x_{1},x_{2})$
is $-\gamma=\beta^{2}-3\alpha^{2}$ and its initial position is $-x_{2}$.
\end{defn}
\begin{rem}
There exists a constant $C>0$ that depends only on $\alpha$ and
$\beta$ such that
\begin{equation}
\forall (t,x)\in\mathbb{R}^{2}, \quad \lvert B_{\alpha,\beta}(t,x;x_{1},x_{2})\rvert\le C\exp(-\beta\lvert x+x_{2}+\gamma t\rvert).
\end{equation}
Further properties of breathers can be found in \cite{key-1} or  \cite{key-49}.
\end{rem}

\subsection{Main result}

We consider multi-breathers made of $K$ breathers and $L$ solitons in the sense of \cite{key-49}, which we recall now. Let $K,L\in\mathbb{N}$ and set $J=K+L$. For $1\le k\le K$, let $\alpha_{k},\beta_{k}>0$ and $x_{1,k}^{0},x_{2,k}^{0}\in\mathbb{R}$, and define the breathers
\begin{equation}
B_{k}:=B_{\alpha_{k},\beta_{k}}(x_{1,k}^{0},x_{2,k}^{0}), \quad \text{with velocity} \quad v_{k}^{b}:=\beta_{k}^{2}-3\alpha_{k}^{2}. \label{br}
\end{equation}
For $1\le l\le L$, let $c_{l}>0$, $\kappa_{l}\in\{-1,1\}$ and
$x_{0,l}^{0}\in\mathbb{R}$ and consider the solitons \begin{equation}
R_{l}:=R_{c_{l},\kappa_{l}}(x_{0,l}^{0}) , \quad \text{with velocity} \quad v_{l}^{s}:=c_{l}.\label{sol}
\end{equation}
An essential assumption in the present analysis is that \emph{all the velocities of the considered objects, solitons or breathers,
must be distinct}:
\begin{equation}
\forall k\neq k'\quad v_{k}^{b}\neq v_{k'}^{b},\qquad\forall l\neq l'\quad v_{l}^{s}\neq v_{l'}^{s},\qquad\forall k,l\quad v_{k}^{b}\neq v_{l}^{s}.\label{v_dist}
\end{equation}
We may therefore order the speed and define an increasing function:
\begin{equation}
\underline{v}:\{1,\dots,J\}\to\{v_{k}^{b},1\le k\le K\}\cup\{v_{l}^{s},1\le l\le L\}.\label{v_j}
\end{equation}
The $J$-uple $(v_{1},\dots,v_{J})$ is thus the ordered set of all possible
velocities of our objects. This allows us to write in a convenient way the sum of breathers and solitons: for $1\le j\le J$, we define $P_{j}$
as the object (either the soliton $R_{l}$ or the breather $B_{k}$)
that corresponds to the velocity $v_{j}$, so that $P_{1},\dots,P_{J}$
are ordered by increasing velocity. We consider the sum of these breathers and solitons
\begin{equation}
P:=\sum_{j=1}^{J}P_{j}=\sum_{k=1}^{K}B_{k}+\sum_{l=1}^{L}R_{l}, \label{sum}
\end{equation}
and the associated multi-breather, that are solutions to \eqref{mKdV} which behave as $P$ for large time as defined below

\begin{defn}
\label{def:mb}
Given solitons and breathers (\ref{sol}), (\ref{br}),
whose sum is given by (\ref{sum}), a \emph{multi-breather} associated
to the sum $P$ of solitons and breathers is a solution $p\in\mathcal{C}([T^{*},+\infty),H^{2}(\mathbb{R}))$,
for a constant $T^{*}>0$, of (\ref{mKdV}) such that 
\begin{equation} \label{def:mbH2}
\lVert p(t)-P(t)\rVert_{H^{2}} \to 0 \quad \text{as} \quad t \to +\infty.
\end{equation}
\end{defn}

Let us recall from \cite[Theorem 1.2]{key-49} that as soon as the condition on the speeds \eqref{v_dist} on $P$ is satisfied, there exist a multi-breather $p$ related to $P$, such that $p$ is smooth and the convergence $p-P \to 0$ occurs at an exponential rate in any $H^s$, $s \in \m R$. 

Also, from \cite[Theorem 1.4]{key-49}, under the extra assumption that all speed are positive, that is $v_1 >0$, this multi-breather is unique in the class of \eqref{def:mbH2}.
\bigskip

In this note, we revisit the proof given in \cite{key-49}, in order to improve this last uniqueness result: it actually holds if at most one of the velocities is non positive, that is under the assumption $v_2 >0$. Here is the precise statement.

\begin{thm}
\label{thm:uniq}
Given breathers \eqref{br} and solitons \eqref{sol},
whose velocities satisfy \eqref{v_dist}, let $P$ be the sum of the considered solitons and breathers given
in \eqref{sum}.

Assume that $v_{2}>0$, so that all velocities, except
possibly one, are positive. Then the multi-breather of \cite{key-49}
$p \in \q C([T^*,+\infty),H^2)$ associated to $P$ is the \emph{unique} solution of \eqref{mKdV} such that \eqref{def:mbH2} holds.
\end{thm}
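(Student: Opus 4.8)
The plan is to follow the standard energy-method / Liouville-type scheme for multi-soliton uniqueness, as in \cite{key-49}, and to localize the argument so that the single possibly non-positive velocity $v_1$ does not spoil the monotonicity estimates. Let $p$ be any solution satisfying \eqref{def:mbH2} and let $\tilde p$ be the smooth multi-breather from \cite[Theorem 1.2]{key-49}; set $z := p - \tilde p$, so $\|z(t)\|_{H^2} \to 0$ as $t \to +\infty$. The goal is to show $z \equiv 0$. The heart of the matter is to prove a coercive a priori estimate of the form $\|z(t)\|_{H^2}^2 \lesssim (\text{boundary terms at }+\infty) = 0$, obtained by running a suitable Lyapunov functional backward from $t = +\infty$.

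First I would set up the family of cutoff functions separating the $J$ objects by their asymptotic spatial position. Since $P_2, \dots, P_J$ all have positive velocity and $P_1$ has the smallest velocity, for large $t$ the object $P_1$ sits to the far left and $P_2, \dots, P_J$ are well-separated and moving to the right. Pick mid-velocities $m_j \in (v_{j-1}, v_j)$ for $2 \le j \le J$ and define smooth monotone partition-of-unity weights $\phi_j(t,x)$ built from $\tanh$-type functions centered at $m_j t$; let $\psi_j := \phi_j - \phi_{j+1}$ localize around $P_j$. The key point is that the cut between $P_1$ and the rest can be placed at $x \sim m_2 t$ with $m_2$ of arbitrary sign as long as $m_2 < v_2$: even if $v_1 \le 0$, one still has $v_1 < m_2$, so the left region genuinely separates from the right region at a linear rate, and that is all the monotonicity computations need. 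This is the one place where the hypothesis $v_2 > 0$ (rather than $v_1 > 0$) is used: it guarantees that all the objects to the right of the cut have positive velocity, so the classical monotonicity of mass/energy for \eqref{mKdV} to the right of a right-moving front applies verbatim to the bundle $P_2 + \dots + P_J$, while the single left object $P_1$ is handled separately on its own half-line.

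Next I would establish the monotonicity estimates: for each localized mass and energy-type quantity $\int \psi_j (\partial_x^{\le 2} p)^2$, differentiating in $t$ and using the equation produces a good negative term from the drift $\partial_t \psi_j \sim -m_j \psi_j'$ plus error terms controlled by the exponential decay of $P$ away from its $j$-th bump; integrating from $t$ to $+\infty$ and using $\|z(t)\|_{H^2}\to 0$ shows these localized quantities are almost conserved, hence $z$ cannot leak mass or energy across the fronts. Then, on each region I would use the $H^2$ variational characterization of a single soliton (the remark after the soliton definition refers to this) and of a single breather (from \cite{key-1, key-49}) — i.e. a coercivity estimate for the second-variation of a conserved Lyapunov functional $F + \sum a_i (\text{lower})$ modulo the symmetry directions (translations, and for breathers the two scaling/phase parameters) — to bound $\|\psi_j^{1/2} z(t)\|_{H^2}^2$ by the variation of that functional plus the monotonicity errors. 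Summing over $j$, modulating out the $2K + L + \dots$ symmetry parameters via an implicit-function-theorem argument (so that the orthogonality conditions kill the bad directions), and sending $t \to +\infty$ to annihilate the boundary contributions, yields $\|z(t)\|_{H^2} = 0$ for all $t \ge T^*$.

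The main obstacle, and the only genuinely new ingredient compared with the all-positive case, is the treatment of the leftmost object $P_1$ when $v_1 \le 0$: one cannot use the global right-moving monotonicity there, so one must instead run the monotonicity on the \emph{left} half-line $\{x < m_2 t\}$, where $P_1$ is the only object and where the front at $m_2 t$ moves to the right of $P_1$ at speed $m_2 - v_1 > 0$; the sign of the associated flux term for \eqref{mKdV} must be checked to be favorable on that side (this is automatic for mass, and for the energy/$F$-type quantities it follows from the same virial identities used in \cite{key-49}, since the relevant sign depends on the \emph{relative} velocity $m_2 - v_1 > 0$, not on $v_1$ itself). If $P_1$ is a soliton the coercivity modulo translation is classical; if it is a breather one uses the breather coercivity of \cite{key-1}. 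Once this left-region estimate is in place, the rest of the argument is a bookkeeping adaptation of \cite{key-49}, and I would simply indicate the modifications rather than rewrite the whole proof.
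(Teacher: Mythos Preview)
Your proposal misidentifies the actual obstruction and the fix you suggest does not work. The monotonicity formulas for the localized mass, energy, and second energy of \eqref{mKdV} depend on the absolute velocity $m_j$ of the cut-off, not on the relative velocity $m_j-v_1$: for instance, differentiating $M_j(t)=\int u^2\Phi_j$ produces a term $\tfrac{m_j}{2}\int u^2|\Phi_{jx}|$, whose sign is that of $m_j$. So you cannot ``place the cut at $m_2 t$ with $m_2$ of arbitrary sign''; the cut between $P_1$ and the rest must satisfy $0<m_1<v_2$ (this is precisely where the hypothesis $v_2>0$ enters). With this constraint the localized mass $M_1$ is fine, but the Lyapunov functional $\mathcal H_1=F_1+2(b_1^2-a_1^2)E_1+(a_1^2+b_1^2)^2M_1$ is not: when $P_1$ is a breather with $b_1^2-a_1^2<0$ (which is exactly when $v_1\le 0$ can occur), the time-derivative computation leaves a term $\big(3(b_1^2-a_1^2)+\tfrac{m_1}{2}\big)\int u_{xx}^2|\Phi_{1x}|$ with a negative coefficient that no choice of $m_1\in(0,v_2)$ can remedy in general. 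Your claim that ``the relevant sign depends on the relative velocity'' is simply false here, and nothing in your outline addresses this bad term.

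The paper's actual new ingredient is an interpolation: one works with a weakened functional $\mathcal F_1=F_1+2(b_1^2-a_1^2)E_1+\nu(a_1^2+b_1^2)^2M_1$ with $\nu<1$ chosen close to $1$, bounds $\int u_{xx}^2|\Phi_{1x}|$ via integration by parts and Cauchy--Schwarz by $\big(\int u_x^2|\Phi_{1x}|\big)^{1/2}\big(\int u_{xxx}^2|\Phi_{1x}|\big)^{1/2}$ (plus lower-order pieces controlled by taking the decay rate $\sigma$ of $\Phi$ small), and then absorbs the negative contribution into the good $u_{xxx}^2$ and $u_x^2$ terms. This works only under carefully tuned algebraic conditions on $\nu$, $m_1$, and $(a_1,b_1)$; the slack $(1-\nu)>0$ is then reused to control the scalar product $\int\widetilde P_1 w$. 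Finally, the paper does not prove $z\equiv 0$ directly: it first upgrades the convergence in \eqref{def:mbH2} to an exponential rate and then invokes the already-known uniqueness in the super-polynomial class. Your route might be salvageable, but only after you supply the interpolation argument above; as written, the monotonicity step fails.
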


Multi-solitons have been constructed for many dispersive models (see for example \cite{key-3,key-16,key-11,key-66} for (NLS), Klein-Gordon, or water-waves), but the question of uniqueness (or classification) is generally open. The examples that we are aware of, were such uniqueness is known, are the generalized Korteweg-de Vries equation \cite{key-2}, the generalized Benjamin-Bona-Mahony equation \cite{EDM04} and the Zakharov-Kuznetsov equation \cite{key-60}. The underlying difficulty is the interaction of the nonlinear object and linear dispersion: for the two models, solitons move to the right, and dispersion to the left, which allows a very nice decoupling, that one can express via a monotonicity property. This feature is however absent in other dispersive models, which explains why uniqueness of multi-solitons remains an open problem in general.

In this note, we consider the more complex multi-breather of \eqref{mKdV}. The point is that breathers may travel to the left, in the Airy dispersion zone: but if at most one moves there, our result shows that uniqueness still holds.

\section{Proof}

In order to prove Theorem \eqref{thm:uniq}, we consider $P$ as in \eqref{sum}, we assume that $v_2 >0$, and we consider a multi-breather $u \in \q C([T,+\infty),H^2)$ such that
\[ \| u(t) - P(t) \|_{H^2} \to 0 \quad \text{as} \quad t \to +\infty. \]
Our goal is to prove that $u\equiv p$, the multi-breather associated to $P$ constructed in \cite[Theorem 1.2]{key-49}. For this, the main step is to prove that $u(t)$ converges actually exponentially fast to its profile $P(t)$.

\begin{prop}
\label{prop:uniq}
There exists $\varpi>0$, $T_{0}\ge T$ and $C>0$ such that
\begin{equation}
\forall t \ge T_0, \quad \lVert u(t)-P(t)\rVert_{H^{2}}\le Ce^{-\varpi t}.
\end{equation}
\end{prop}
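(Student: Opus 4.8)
The plan is to set up the classical bootstrap/modulation machinery for multi-solitons and adapt it to the breather setting with only one object allowed to travel left, following the scheme of \cite{key-49} but sharpening the monotonicity argument. First I would, for times $t \ge T^*$ large, decompose $u(t) = P^{\text{mod}}(t) + \eta(t)$, where $P^{\text{mod}}$ is the sum of the basic objects with modulated parameters (velocities, translations, and for breathers the internal phase) chosen by the implicit function theorem so that $\eta(t)$ satisfies suitable orthogonality conditions with respect to the generalized kernels of the linearized operators at each $P_j$; since $\|u(t)-P(t)\|_{H^2}\to 0$, the modulation parameters stay close to their nominal values and $\|\eta(t)\|_{H^2}\to 0$. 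The goal is an a priori estimate: on any interval $[t_1,t_2]$ on which $\|\eta(t)\|_{H^2}$ is small, one controls $\|\eta(t_1)\|_{H^2}$ by $\|\eta(t_2)\|_{H^2}$ plus the variation of modulation parameters, and ultimately shows all these quantities decay exponentially backward from $+\infty$.

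The core of the argument is a \emph{monotonicity} estimate. I would pick cutoff functions $\phi$ transported at well-chosen intermediate speeds $\sigma_j$, strictly between consecutive velocities $v_{j-1}$ and $v_j$, and form localized mass/energy-type functionals $\int \phi_j\big(\tfrac12 u_x^2 - \cdots\big)$. The key mechanism is that the Airy flow pushes mass to the \emph{left} at speed comparable to $3\xi^2$, so for objects moving to the right (all but possibly $P_1$) the almost-monotonicity of these localized quantities holds with an exponentially small error, exactly as for gKdV multi-solitons. The improvement over \cite{key-49} — allowing $v_1 \le 0$ while $v_2>0$ — comes from choosing $\sigma_1$ with $v_1 < \sigma_1 < 0 < \sigma_2 < v_2$: the single left-moving object $P_1$ is then cleanly separated from everything else by the gap $(\sigma_1,\sigma_2)$, and one never needs a right-moving dispersive region to the right of a left-moving soliton/breather. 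Combining the monotonicity functionals across all $J$ cutoffs yields control of $\int_{\text{right of }\sigma_j} \big(\text{energy density}\big)$, hence of $\|\eta(t)\|_{H^2}$ on the relevant regions, with errors $O(e^{-\theta t})$ where $\theta$ depends on the spectral gaps and the speed separations.

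Next I would combine the monotonicity bounds with the coercivity of the linearized action (the $H^2$ variational structure of solitons and breathers, as in \cite{key-49}, \cite{key-1}): modulo the finitely many orthogonality directions, the second variation of a suitable conserved combination $\sum_j \mu_j$-weighted $(F,E,M)$ is positive definite on $\eta$, so $\|\eta(t)\|_{H^2}^2 \lesssim \sum (\text{localized functionals}) + (\text{modulation terms}) + (\text{errors})$. Feeding in the modulation equations — which show $|\dot{\text{parameters}}| \lesssim \|\eta\|_{H^2}^2 + e^{-\theta t}$ — one gets a closed differential inequality of the form $\tfrac{d}{dt}\mathcal N \ge -C e^{-\theta t}$, or more precisely a backward Gronwall estimate yielding $\|\eta(t)\|_{H^2} \le C e^{-\varpi t}$ for some $0<\varpi<\theta$, and hence $\|u(t)-P(t)\|_{H^2}\le Ce^{-\varpi t}$ after reabsorbing the (exponentially small) modulation shifts.

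The main obstacle I anticipate is the monotonicity step in the presence of a left-moving breather: a breather is not sign-definite and its internal oscillation at frequency $\sim\alpha_1$ interacts with the Airy dispersion, so one must be careful that the localized functional genuinely decouples $P_1$ from the dispersive tail of the other objects and that the error terms coming from the breather's oscillatory profile are still exponentially small in $t$ (not merely small). The assumption $v_2>0$ is exactly what makes this work: with two or more left-movers one would need a monotonicity region squeezed between two left-moving nonlinear objects, where the dispersive flux no longer has a favorable sign, and the argument breaks down. The remaining steps — modulation, coercivity, Gronwall — are by now standard and essentially carried over from \cite{key-49}.
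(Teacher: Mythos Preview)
Your proposal has the right overall architecture (modulation, localized functionals, monotonicity, coercivity, close the loop), but the step you identify as the improvement is precisely where the argument would fail.

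You propose to take a cutoff speed $\sigma_1$ with $v_1<\sigma_1<0$. For \eqref{mKdV}, the almost-monotonicity of localized mass/energy/second-energy functionals requires the cutoff line to move at \emph{positive} speed: in the computation one finds, schematically,
\[
\frac{d}{dt}M_j(t) \ge -Ce^{-2\varpi t}+\frac{3}{2}\int u_x^2\,|\Phi_{jx}|+\frac{m_j}{2}\int u^2\,|\Phi_{jx}|-\omega\int u^2\,|\Phi_{jx}|,
\]
and similarly for $E_j,F_j$; the sign of the $\frac{m_j}{2}$ coefficients is what makes the flux favorable. With $m_j<0$ these terms change sign and there is no monotonicity at all. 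The issue is not separating $P_1$ from $P_2,\dots,P_J$ by two lines; it is that dispersion can freely cross any left-moving line. So the mechanism you describe (``one never needs a right-moving dispersive region to the right of a left-moving soliton/breather'') is not the actual difficulty, and a negative-speed cutoff does not fix it.

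What the paper actually does is keep \emph{all} cutoff speeds positive, in particular $\max(0,v_1)<m_1<v_2$, and then confront the real obstruction: with $v_1\le 0$ one has $b_1^2-a_1^2<0$, so in $\frac{d}{dt}\mathcal{F}_1$ the coefficient $3(b_1^2-a_1^2)+\frac{m_1}{2}$ in front of $\int u_{xx}^2|\Phi_{1x}|$ can be negative. The paper controls this bad term by an interpolation (integration by parts plus Cauchy--Schwarz) that bounds $\int u_{xx}^2|\Phi_{1x}|$ by $\big(\int u_{xxx}^2|\Phi_{1x}|\big)^{1/2}\big(\int u_x^2|\Phi_{1x}|\big)^{1/2}$ up to small errors, and then uses a carefully split AM--GM with the positive $u_{xxx}^2$ and $u_x^2$ terms; this works precisely because $(b_1^2-a_1^2)+\nu_1(a_1^2+b_1^2)>0$ for $\nu_1$ close to $1$, which encodes the condition that only one velocity is nonpositive. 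A second ingredient you do not mention, and which is essential, is that one proves monotonicity not for the Lyapunov functional $\mathcal{H}_j$ itself but for a \emph{weakened} version $\mathcal{F}_j=F_j+2(b_j^2-a_j^2)E_j+\nu(a_j^2+b_j^2)^2M_j$ with $\nu<1$; the slack $(1-\nu)>0$ is exactly what allows one to extract (and bound) the linear scalar product $\int \widetilde P_j w$, which is the negative direction left over from coercivity. Without this $\nu<1$ trick, your coercivity-plus-monotonicity loop would not close.
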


This corresponds to Proposition 4.10 in \cite{key-49} (where of course the assumption on $P$ is different: there one suppose that $v_1 >0$.).

\subsection{Proof of Proposition \ref{prop:uniq}}

The proof follows mainly the lines of Section 4.2 of \cite{key-49}, with several changes that we will detail here. An important ingredient is (almost) monotonicity properties of localized quantities.

We do our best to treat breather and solitons together. To this end, we define the shape parameters as follows: for $j=1,\dots,J$,
\begin{itemize}
\item if $P_j=B_k$ is a breather, then
\begin{equation} \label{def:shapeB}
(a_j,b_j)=(\alpha_k,\beta_k),
\end{equation}
\item if $P_j=R_l$ is a soliton, then
\begin{equation}
(a_j,b_j)=(0,\sqrt{c_l}).  \label{def:shapeS}
\end{equation}
\end{itemize}

The shape of the cut-off function that we will use is given by $\Psi$:
\begin{equation}
\Psi(x):=\frac{2}{\pi}\arctan\big(\exp(-\sqrt{\sigma}x/2)\big),
\end{equation}
where $\sigma>0$ is small enough (and precise conditions will be given in the proof).

We consider a cut-off function $\Phi_j$ given for $j=1,\dots,J-1$ by
\begin{equation}
\Phi_{j}(t,x):=\Psi(x-m_{j}t),
\end{equation}
and $\Phi_{J} \equiv 1$.
For $j \in \{ 1, \dots, J-1 \}$, $\Psi_j$ tend to $1$ at $-\infty$, to $0$ at $+\infty$, with an exponentially localized transition between the centers of $P_{j}$ and $P_{j+1}$. This requires that for all such $j$, $v_{j} < m_j < v_{j+1}$.

However, in order to the monotonicity argument
to work, we need to choose cut-off functions that 
move to the right, i.e. have positive velocities $m_j >0$. For $j =2, \dots, J-1$, we set
\begin{equation}
m_j:=\frac{v_{j}+v_{j+1}}{2}.
\end{equation} 
(Then we indeed have $m_j>0$ because $v_{j} \ge v_2 >0$).
On the other hand, $m_{1}$ needs better tuning: we define $m_{1}$ in the following way.
\begin{enumerate}
\item We first set $0<\nu_1<1$ such that
\begin{equation}
\label{nu1}
(b_{1}^2-a_{1}^2)+\nu_1(a_{1}^2+b_1^2)>0,
\end{equation}
\item then we choose $m_1$ such that
\begin{align}
\max(0,v_1) & <m_{1}<v_2, \quad  \text{and} \\
\label{m2def}
m_1(b_1^2-a_1^2) & >\frac{1}{2}(\nu_1-1)(a_1^2+b_1^2)^2.
\end{align}
\end{enumerate}
Condition \eqref{nu1} corresponds to a choice of $\nu_1$ sufficiently near $1$.
Condition \eqref{m2def} can be satisfied: indeed, if $b_1^2-a_1^2 \le 0$ then $v_1<0$ and it suffices to choose $m_1>0$ sufficiently small; if $b_1^2-a_1^2 >0$, it is always satisfied as the righthand side is negative.

These conditions will be used in order to derive suitable monotonicity properties, see Step 3.

\bigskip

We set $\tau_{0}>0$ the minimal distance between $\{v_{1},\dots,v_{J}\}$
and $\{m_{1},\dots,m_{J-1}\}$.

As mentioned, the scheme of the proof is here roughly the same as in \cite{key-49}; however, some specifics change.

First we modulate the breathers $P_j$ into $\widetilde P_j$ (by translation only), so that the default $w = u - \widetilde P$ (defined in Lemma \ref{lem:mod}) enjoys orthogonality properties. Then we prove that $\lVert w(t)\rVert_{H^{2}}\le Ce^{-\varpi t}$
by induction, where $\varpi>0$ is a constant depending on the data of the problem.

For $j=1,\dots,J$, proposition $\mathcal{P}_{j}$ reads
\begin{equation} \label{def:rec}
\forall t \ge T', \quad \int\big(w^{2}+w_{x}^{2}+w_{xx}^{2}\big)\Phi_{j} +  \sum_{i=1}^{j} \left| \int \widetilde P_{i} w \right| \le C_j e^{-2\varpi t},
\end{equation}
where $T' \ge T$ is to be defined in the proof. $\mathcal P_0$ is the assertion ``True''. 

Given $j \in \{ 1, \dots, J \}$, we assume $\mathcal{P}_{j-1}$, and our goal is to prove $\mathcal{P}_{j}$.
We finally infer $\lVert u(t) - P(t) \rVert_{H^{2}}\le Ce^{-\varpi t}$, in the concluding step of the proof.

\bigskip

One difference with \cite{key-49} is that here we make our proof by induction on the
modulated difference $w$ and not on $u-P$. This is not crucial, but we find it nicer to perform a modulation for all the objects at once. One key difference compared to \cite{key-49}, though, 
is that we need and prove monotonicity for a functional which is slightly weaker than the natural Lyapunov functional required for the proof. When $v_1 <0$, the proof requires a careful interpolation between positive terms in order to balance negative terms.

\subsubsection*{Step 1: Modulation}

This step is devoted to the proof of the following modulation lemma.
\begin{lem} \label{lem:mod}
There exists $C>0$, $T_{2}\ge T$, such that there exist unique
$\mathcal{C}^{1}$ functions $y_{1,k},y_{2,k},y_{0,l}:[T_{2},+\infty)\to\mathbb{R}$
such that if we set:
\begin{equation}
w(t,x):=u-\widetilde P ,
\end{equation}
where
\begin{equation}
\widetilde P (t,x):=\sum_{k=1}^K\widetilde B_k(t,x)+\sum_{l=1}^L\widetilde R_l (t,x),
\end{equation}
and
\begin{equation}
\widetilde R_l (t,x):=\kappa_{l}Q_{c_{l}}(x-x_{0,l}^{0}+y_{0,l}(t)-c_{l}t),
\end{equation}
\begin{equation}
\widetilde B_k (t,x):=B_{\alpha_{k},\beta_{k}}(t,x;x_{1,k}+y_{1,k}(t),x_{2,k}+y_{2,k}(t)),
\end{equation}

then, $w(t)$ satisfies, for any $t\in[T_{2},+\infty)$,
\begin{equation}
\forall l=1,\dots,L,\quad\int (\widetilde R_l)_{x}(t)w(t)=0,
\end{equation}
\begin{equation}
\forall k=1,\dots,K\quad\int (\widetilde B_k)_{1}(t)w(t)=\int (\widetilde B_k)_{2}(t)w(t)=0,
\end{equation}
where we denote:
\begin{equation}
(\widetilde B_k)_{1} (t,x):=\partial_{x_{1}}\widetilde B_k,\quad (\widetilde B_k)_{2}(t,x):=\partial_{x_{2}}\widetilde B_k .
\end{equation}

Moreover, for any $t\in[T_{2},+\infty)$,
\begin{equation}
\label{mod_bou}
\lVert w(t)\rVert_{H^{2}}+\lvert y_{1,k}(t)\rvert+\lvert y_{2,k}(t)\rvert+\lvert y_{0,l}(t)\rvert\le C\lVert v(t)\rVert_{H^{2}},
\end{equation}
and, if $\varpi$ is small enough,
\begin{equation}
\label{mod_der}
\lvert y_{1,k}'(t)\rvert+\lvert y_{2,k}'(t)\rvert+\lvert y_{0,l}'(t)\rvert\le C\bigg(\int w(t)^{2}\Phi_{j}\bigg)^{1/2}+Ce^{-\varpi t}.
\end{equation}
\end{lem}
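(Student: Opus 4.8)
The statement to establish is the Modulation Lemma (Lemma \ref{lem:mod}), which sets up the orthogonality framework. The plan is a standard implicit function theorem / continuity argument adapted to the decomposition $u = \widetilde P + w$.

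First I would set up the implicit function theorem. Consider the map that sends a tuple of modulation parameters $(y_{1,k},y_{2,k},y_{0,l})$ (there are $2K+L$ of them, matching the number of orthogonality conditions) together with a function $u$ close to $P(t)$ to the vector of inner products
\begin{equation}
\Big( \int (\widetilde R_l)_x (u - \widetilde P), \ \int (\widetilde B_k)_1 (u - \widetilde P), \ \int (\widetilde B_k)_2 (u - \widetilde P) \Big)_{k,l}.
\end{equation}
When all parameters vanish and $u = P(t)$ this vector is zero (each $P_i$ is orthogonal to the derivatives of the others up to exponentially small errors in $t$, by the separation of the centers that grows linearly in $t$; the self inner products like $\int (\widetilde R_l)_x \widetilde R_l$ vanish by oddness or are handled as in \cite{key-49}). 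I would compute the Jacobian of this map with respect to the modulation parameters at that point: the diagonal blocks are essentially $\int ((\widetilde R_l)_x)^2 = \|Q'_{c_l}\|_{L^2}^2 \ne 0$ and the $2\times 2$ Gram matrices $\big(\int (\widetilde B_k)_i (\widetilde B_k)_{i'}\big)_{i,i'=1,2}$, which are invertible since $(\widetilde B_k)_1, (\widetilde B_k)_2$ are linearly independent (this is a known nondegeneracy fact for breathers, cf. \cite{key-1,key-49}); the off-diagonal blocks between distinct objects are $O(e^{-\theta t})$ because of velocity separation \eqref{v_dist} and the exponential localization in the Remarks. Hence for $t$ large the full Jacobian is a small perturbation of an invertible block-diagonal matrix, so it is invertible with uniformly bounded inverse. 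The implicit function theorem then yields, for each $t \ge T_2$ (with $T_2$ large) and each $u(t)$ with $\|u(t) - P(t)\|_{H^2}$ small, unique parameters solving the orthogonality system, and the bound \eqref{mod_bou} follows from the quantitative inverse function theorem (the size of the parameters is controlled by the size of the initial inner products, which is $O(\|u(t)-P(t)\|_{H^2}) = O(\|w(t)\|_{H^2})$, and then $\|w\|_{H^2} \le \|u - P\|_{H^2} + \|\widetilde P - P\|_{H^2} \lesssim \|u-P\|_{H^2}$).

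Next I would get the $\mathcal C^1$ regularity in $t$ and the derivative bound \eqref{mod_der}. Smoothness of $t \mapsto (y_{1,k},y_{2,k},y_{0,l})$ comes from the $\mathcal C^1$ dependence in the implicit function theorem, using that $t \mapsto u(t) \in H^2$ is continuous and that $u$ solves \eqref{mKdV}. To get \eqref{mod_der}, differentiate each orthogonality relation $\int Z_a(t) w(t) = 0$ in time, where $Z_a$ ranges over $(\widetilde R_l)_x, (\widetilde B_k)_1, (\widetilde B_k)_2$. Using the equation for $w$ (obtained by subtracting the equations for $u$ and for each modulated $\widetilde P_i$, which introduces the $\dot y$'s through $\partial_t \widetilde P_i$), one gets a linear system of the form $A(t)\, \dot y(t) = b(t)$, where $A(t)$ is again the perturbed-invertible Gram-type matrix from above, and $b(t)$ collects: (i) terms linear in $w$ integrated against fixed exponentially localized profiles, which are bounded by $C(\int w^2 \Phi_j)^{1/2}$ after using the localization of $Z_a$ near the $j$-th center — here one must be a little careful to pick the right cut-off, but since each $Z_a$ is localized around one of the $P_i$, and $\Phi_j \ge$ some positive constant on the support region up to exponential tails, this is routine; (ii) purely nonlinear-in-$w$ terms, quadratic or higher, bounded by $C\|w\|_{H^2}^2 \le C\|w\|_{H^2}\cdot(\text{small}) $; and (iii) interaction error terms between distinct $P_i$'s and $\dot y$-independent pieces, which are $O(e^{-\varpi t})$ by velocity separation. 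Inverting $A(t)$ gives \eqref{mod_der}.

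\textbf{Main obstacle.} The routine parts are the IFT setup and the bookkeeping of exponentially small interaction terms. The genuinely delicate point specific to the breather case is the nondegeneracy of the $2\times 2$ Gram matrix $\big(\int (\widetilde B_k)_i (\widetilde B_k)_{i'}\big)$ and, relatedly, making sure the modulation is well defined using \emph{only} translations $y_{1,k}, y_{2,k}$ (not the shape parameters $\alpha_k, \beta_k$) — one must verify that the two translation directions $\partial_{x_1} B, \partial_{x_2} B$ already span a space on which the relevant quadratic form is nondegenerate, which relies on the structural/spectral properties of breathers recalled from \cite{key-1,key-49}. Once that is in hand, the rest is a direct transcription of the soliton modulation argument, and I would simply cite \cite{key-49} for the parts that are unchanged, emphasizing only that the orthogonality directions and the error bookkeeping go through verbatim under the present hypotheses since this lemma does not use any sign condition on the velocities.
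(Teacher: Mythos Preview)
Your proposal is correct and follows the standard implicit function theorem scheme that the paper itself simply defers to \cite[Lemma 2.8]{key-49}; the paper's entire proof is a one-line citation, so your sketch is in fact more explicit than what appears there. You also correctly flag the two points the authors note after the proof: that only translation parameters (not shape parameters) are modulated, and that the breather nondegeneracy of $\partial_{x_1}B,\partial_{x_2}B$ is the one nontrivial ingredient, imported from \cite{key-1,key-49}.
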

\begin{proof}
The proof of this lemma can be performed in the same manner as in
\cite[Lemma 2.8]{key-49}.
\end{proof}

As above for \eqref{sum}, we denote $\tilde P_j = \tilde B_k$ if $P_j = B_k$ is a breather, and $\tilde P_j = \tilde R_l$ if $P_j=R_l$ is a soliton, so that
\[ \tilde P = \sum_{j=1}^J \tilde P_j. \]

The difference with \cite{key-49} is that
\begin{itemize}
\item the modulation that we perform here does not modify any shape parameter (that is why there is only one modulation direction for each soliton here),
\item we perform the modulation once and not on each step of the induction.
\end{itemize}

\subsubsection*{Step 2: Approximation of the Lyapunov functional}

This step is devoted to define a localized Lyapunov
functional. Let $j \in \{ 1, \dots, J \}$. First, we define 
 the localized conservation laws are defined as follows:
\begin{align}
M_j(t) & :=\int u^2(t)\Phi_j(t), \\
E_j(t) & :=\int \bigg[\frac{1}{2}u_x^2-\frac{1}{4}u^4\bigg]\Phi_j(t), \\
F_j(t) & :=\int\bigg[\frac{1}{2}u_{xx}^2-\frac{5}{2}u^2u_x^2+\frac{1}{4}u^6\bigg]\Phi_j(t).
\end{align}

Then  the localized Lyapunov functional is
\begin{equation}
\mathcal{H}_{j}(t):=F_{j}(t)+2\big(b_{j}^{2}-a_{j}^{2}\big)E_{j}(t)+\big(a_{j}^{2}+b_{j}^{2}\big)^{2}M_{j}(t),
\end{equation}
where $a_j,b_j$ stand for generalized shape parameters defined in \eqref{def:shapeB}-\eqref{def:shapeS}.

This Lyapunov functional was already introduced in \cite{key-49}.
We prove the following lemma that somehow quantifies how far is a modulated sum of solitons and breathers from being a critical point for $\mathcal{H}_{j}$.

First, there hold the following Taylor expansions.

\begin{lem} \label{lem:taylor_1}
There exists $C>0$, $T_{1} \ge T$
such that the following holds for any $t\ge T_{1}$:

\begin{gather} 
\bigg\lvert M_{j}(t)-\sum_{i=1}^{j}M\big[\widetilde P_{i}\big]-\sum_{i=1}^{j}\int\widetilde P_{i}w-\frac{1}{2}\int w^{2}\Phi_{j}\bigg\rvert  \le  Ce^{-2\varpi t},\label{1eq:-155} \\
 \bigg\lvert E_{j}(t)-\sum_{i=1}^{j}E\big[\widetilde P_{i}\big]-\sum_{i=1}^{j}\int\Big[ (\widetilde P_{i})_{x}w_{x}-\widetilde P_{i}^{3}w\Big] -\int\Big[\frac{1}{2}w_{x}^{2}-\frac{3}{2}\widetilde P^{2} w^{2}\Big]\Phi_{j}\bigg\rvert  \le  Ce^{-2\varpi t}+ o\left( \int w^{2}\Phi_{j} \right) ,\label{1eq:-156} \\
\bigg\lvert F_{j}(t)-\sum_{i=1}^{j}F\big[\widetilde P_{i}\big]-\sum_{i=1}^{j}\int\Big[(\widetilde P_{i})_{xx}w_{xx}-5\widetilde P_{i}(\widetilde P_{i})_{x}^{2}w-5 {\widetilde P_{i}}^{2} (\widetilde P_{i})_{x}w_{x}+\frac{3}{2} \widetilde P_{i}^{5}w\Big] \nonumber \\
  \quad -\int\Big[\frac{1}{2}w_{xx}^{2} -\frac{5}{2}w^{2} \widetilde P_{x}^{2}-10\widetilde P w \widetilde P_{x}w_{x}-\frac{5}{2}\widetilde P^{2} w_{x}^{2}+\frac{15}{4} \widetilde P ^{4} w^{2}\Big]\Phi_{j}(t)\bigg\rvert\\ \le  Ce^{-2\varpi t}+o\left( \int\big(w^{2}+w_{x}^{2}\big)\Phi_{j} \right).\label{1eq:-157} 
\end{gather}
\end{lem}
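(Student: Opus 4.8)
The plan is to prove these Taylor expansions by expanding each localized conservation law around the modulated profile $\widetilde P$, writing $u = \widetilde P + w$ and keeping track of all terms through quadratic order in $w$. The structure is the same as \cite[Lemma 3.x]{key-49}; the main points to verify are (i) the cross terms between distinct $\widetilde P_i$'s are exponentially small, (ii) the ``leftover'' terms involving $\widetilde P_i$ on cutoff regions where $\Phi_j$ is not adapted to $P_i$ are exponentially small, and (iii) the genuinely cubic-and-higher remainders in $w$ are controlled by the stated $o(\cdot)$ and $Ce^{-2\varpi t}$ errors. I would first record the pointwise decay estimates for $\widetilde B_k$ and $\widetilde R_l$ (from the Remarks after their definitions, together with the modulation bound \eqref{mod_bou} which keeps the translation parameters bounded), and the fact that for $i \ne i'$ the profiles $\widetilde P_i$ and $\widetilde P_{i'}$ are centered at points separated by at least $\tau_0 t$ for $t$ large, hence any product $\widetilde P_i^{a} \widetilde P_{i'}^{b}$ integrates to $O(e^{-ct})$ for some $c>0$; likewise $\widetilde P_i (1-\Phi_i)$ and $\widetilde P_i \Phi_{i-1}$ type quantities are $O(e^{-ct})$ by the exponential transition of $\Psi$ placed strictly between consecutive centers.

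The three expansions are then obtained term by term. For $M_j$: $u^2 = \widetilde P^2 + 2\widetilde P w + w^2$, so $M_j = \int \widetilde P^2 \Phi_j + 2\int \widetilde P w \, \Phi_j + \int w^2 \Phi_j$. I would replace $\int \widetilde P^2 \Phi_j$ by $\sum_{i=1}^j M[\widetilde P_i]$ (up to $O(e^{-ct})$) by noting $\widetilde P^2 \Phi_j = \sum_i \widetilde P_i^2 \Phi_j + (\text{cross terms})$, then $\widetilde P_i^2 \Phi_j \approx \widetilde P_i^2$ for $i \le j$ (since $\Phi_j = 1$ near the $i$-th center for $i\le j$, with $O(e^{-ct})$ error) and $\widetilde P_i^2 \Phi_j = O(e^{-ct})$ for $i>j$; similarly $\int \widetilde P w\,\Phi_j \approx \sum_{i=1}^j \int \widetilde P_i w$ using the orthogonality-free $L^2$ bound $\|w\|_{H^2} \le C$ and the localization of each $\widetilde P_i$, and the factor $\tfrac12$ in front of $\int w^2 \Phi_j$ comes directly from $w^2$. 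The energy $E_j$ and second energy $F_j$ are handled the same way, now expanding $u^4 = \widetilde P^4 + 4\widetilde P^3 w + 6 \widetilde P^2 w^2 + O(|w|^3)$ (and similarly $u_x^2$, $u^2 u_x^2$, $u^6$, $u_{xx}^2$): the linear-in-$w$ terms assemble into $\sum_{i=1}^j \int[(\widetilde P_i)_x w_x - \widetilde P_i^3 w]$ etc. after discarding exponentially small cross contributions, the quadratic-in-$w$ terms give the displayed integrals against $\Phi_j$ (with $\widetilde P$ rather than $\widetilde P_i$ inside, since on $\mathrm{supp}\,\Phi_j$ only nearby profiles matter and the difference is exponentially small), and the cubic and quartic remainders are estimated by $\|w\|_{L^\infty} \lesssim \|w\|_{H^1} = o(1)$ times $\int(w^2+w_x^2)\Phi_j$, which accounts for the $o(\int w^2\Phi_j)$ in \eqref{1eq:-156} and $o(\int(w^2+w_x^2)\Phi_j)$ in \eqref{1eq:-157}; the higher $w_{xx}$ nonlinearities in $F$ contribute terms like $\int w_{xx}^2 \widetilde P^2 \Phi_j$ which are $O(\int w_{xx}^2 \Phi_j)$ with a small constant, hence also absorbed — though here one must be slightly careful and either include such a term or note it is dominated, and I would match exactly the form stated in \cite{key-49}.

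The main obstacle, and the place requiring genuine care rather than bookkeeping, is the treatment of the cubic-and-higher terms in the second energy $F_j$: these involve up to $u^6$ and $u^2 u_x^2$ and $u_{xx}^2$, so expanding produces terms such as $\int \widetilde P^3 w^3 \Phi_j$, $\int \widetilde P \, w_x^2 \, w \, \Phi_j$, $\int w_{xx}^2 w \,\Phi_j$ and $\int w^6 \Phi_j$ — the first few are bounded by $\|w\|_{L^\infty}\int(w^2+w_x^2)\Phi_j = o(\int(w^2+w_x^2)\Phi_j)$ using $\|\widetilde P\|_{L^\infty} \le C$, but the $w_{xx}$ ones are more delicate since $w_{xx}$ only lives in $L^2$; one resolves this by noting that in the term $\int w_{xx}^2 w\,\Phi_j$ one puts $w \in L^\infty$ and $w_{xx}^2 \Phi_j \in L^1$, giving $o(\int w_{xx}^2\Phi_j)$, which is consistent with the error term stated (or, if not literally, is still controlled after the later steps where $\int w_{xx}^2 \Phi_j$ is itself shown small). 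A second subtlety is that $\Phi_j$ depends on $t$ and is only a cutoff (not constant) — but since $\Phi_j$ is bounded by $1$, the expansions above hold with $\Phi_j$ simply inserted as a weight throughout, and no $t$-derivative of $\Phi_j$ enters (those appear only later, in the monotonicity step). I would therefore present the proof as: (1) fix notation and the decay/separation estimates, (2) expand $M_j$ and assemble, (3) expand $E_j$ and assemble, (4) expand $F_j$, assemble, and carefully bound the six-order and derivative-heavy remainders, citing \cite[Lemma 3.x]{key-49} for the parts that are verbatim.
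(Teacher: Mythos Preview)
Your approach is correct and is precisely the standard Taylor expansion argument that the paper has in mind: the paper's own proof is a one-line reference to \cite[Proposition 2.12]{key-49} together with the remark that the induction hypothesis is not used here, and your sketch reproduces that computation in detail (separation of the $\widetilde P_i$, exponential smallness of cross terms and of $\widetilde P_i$ against the ``wrong'' side of $\Phi_j$, and $\|w\|_{L^\infty}=o(1)$ control of cubic and higher remainders).

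One small over-worry you can drop: in the expansion of $F_j$ there are \emph{no} cubic-or-higher terms involving $w_{xx}$, because $u_{xx}$ appears only through $u_{xx}^2 = \widetilde P_{xx}^2 + 2\widetilde P_{xx} w_{xx} + w_{xx}^2$, which is exact at quadratic order, while the remaining pieces $u^2 u_x^2$ and $u^6$ involve only $u$ and $u_x$; hence all genuine remainder terms are products of $w$ and $w_x$ (with bounded $\widetilde P$-factors) and are directly $o\big(\int (w^2+w_x^2)\Phi_j\big)$ as stated, with no need to invoke $o\big(\int w_{xx}^2 \Phi_j\big)$.
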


\begin{proof}
See for example \cite[Proposition 2.12]{key-49}. We emphasize that we do not use here the induction assumption.
\end{proof}

The Lyapunov functional is constructed so as to make the linear terms in $w$ cancel, as seen below.

\begin{lem} \label{1lem:dl}
There exists $T_{2} \ge  T_1$ such that the following holds for $t\ge  T_{2}$: 
\begin{align} 
\mathcal{H}_{j}(t) & =\sum_{i=1}^{j}F[\widetilde P_{i} ]+2\big(b_{j}^{2}-a_{j}^{2}\big)\sum_{i=1}^{j}E[\widetilde P_{i} ]+\big(a_{j}^{2}+b_{j}^{2}\big)^{2}\sum_{i=1}^{j}M[\widetilde P_{i} ] \nonumber \\  
& \qquad +H_{j}(t)+O(e^{-2\varpi t})+ o \bigg(\int\big(w^{2}+w_{x}^{2} \big)\Phi_{j}\bigg),\label{1eq:-160}
\end{align}
where  
\begin{align} 
H_{j}(t): & =\int\Big[\frac{1}{2}w_{xx}^{2}-\frac{5}{2}w_{x}^{2}\widetilde P_{j}^{2}+\frac{5}{2}w^{2} (\widetilde P_{j})_{x}^{2}+5w^{2}\widetilde P_{j} (\widetilde P_{j})_{xx} +  \frac{15}{4}w^{2} \widetilde P_{j}^{4}\Big]\Phi_{j}(t) \nonumber \\
& \quad + \big(b_{j}^{2}-a_{j}^{2}\big)\int\Big[w_{x}^{2}-3w^{2}\widetilde P_{j}^{2}\Big]\Phi_{j}(t) +\frac{1}{2}\big(a_{j}^{2}+b_{j}^{2}\big)^{2}\int w^{2}\Phi_{j}(t).
\label{1eq:-161}
 \end{align} 
 \end{lem}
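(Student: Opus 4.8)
The plan is to establish Lemma \ref{1lem:dl} by combining the three Taylor expansions of Lemma \ref{lem:taylor_1} according to the linear combination defining $\mathcal{H}_j$, and then checking that the resulting linear-in-$w$ terms cancel, leaving a quadratic form $H_j$ together with contributions that are either error terms or depend on the profiles $\widetilde P_i$ only through their own conservation laws. First I would write $\mathcal{H}_j = F_j + 2(b_j^2-a_j^2)E_j + (a_j^2+b_j^2)^2 M_j$ and substitute the right-hand sides of \eqref{1eq:-155}, \eqref{1eq:-156}, \eqref{1eq:-157}; the constant parts immediately assemble into $\sum_{i=1}^j F[\widetilde P_i] + 2(b_j^2-a_j^2)\sum_{i=1}^j E[\widetilde P_i] + (a_j^2+b_j^2)^2 \sum_{i=1}^j M[\widetilde P_i]$, which is the first line of \eqref{1eq:-160}, and the error terms $Ce^{-2\varpi t}$ and $o(\int(w^2+w_x^2)\Phi_j)$ collect into the stated remainder.

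The heart of the computation is the linear-in-$w$ part. Collecting the single-integral terms indexed by $i$, one gets, for each $i \le j$,
\[
\int \Big[ (\widetilde P_i)_{xx}w_{xx} - 5\widetilde P_i (\widetilde P_i)_x^2 w - 5\widetilde P_i^2 (\widetilde P_i)_x w_x + \tfrac{3}{2}\widetilde P_i^5 w \Big] + 2(b_j^2-a_j^2)\int\Big[(\widetilde P_i)_x w_x - \widetilde P_i^3 w\Big] + (a_j^2+b_j^2)^2 \int \widetilde P_i w.
\]
I would integrate by parts to put everything against $w$, obtaining $\int w\, \mathcal{L}_i \widetilde P_i$ where $\mathcal{L}_i$ is the relevant fourth-order differential operator. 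The crucial algebraic fact — already exploited in \cite{key-49} and underlying the very definition of $\mathcal{H}_j$ — is that each $\widetilde P_i$, being a (translated) breather or soliton with shape parameters satisfying $b_i^2 - a_i^2$ and $a_i^2+b_i^2$ matching those in the operator \emph{when $i=j$}, is a critical point of the corresponding functional; but here the coefficients are frozen at $(a_j,b_j)$ while $i$ ranges over $1,\dots,j$. For $i<j$ the terms do not cancel by the Euler--Lagrange identity, but they are supported essentially at the center of $P_i$, which lies to the \emph{left} of the transition region of $\Phi_j$ where $\Phi_j \equiv 1$; there, combined with the orthogonality conditions from Lemma \ref{lem:mod} ($\int (\widetilde B_k)_1 w = \int (\widetilde B_k)_2 w = \int (\widetilde R_l)_x w = 0$) and the exponential localization, these contributions are absorbed into $O(e^{-2\varpi t})$. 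For $i=j$, the Euler--Lagrange identity for the breather/soliton equation makes $\mathcal{L}_j \widetilde P_j$ vanish identically (this is exactly why the coefficients $2(b_j^2-a_j^2)$ and $(a_j^2+b_j^2)^2$ were chosen), so the linear term is gone up to where $\Phi_j$ is not locally constant, which again is exponentially small. The remaining quadratic-in-$w$ terms from the three expansions, weighted by $1$, $2(b_j^2-a_j^2)$, $(a_j^2+b_j^2)^2$ respectively, recombine — after using $\widetilde P^2 = \widetilde P_j^2 + (\text{exponentially small on }\mathrm{supp}\,\Phi_j)$ since only $\widetilde P_j$ and $\widetilde P_{j+1},\dots$ meet the support of $\Phi_j$ substantially, and the ones with index $>j$ are negligible there — into precisely the expression $H_j(t)$ in \eqref{1eq:-161}.

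The main obstacle I anticipate is the bookkeeping of which profile terms survive on the support of $\Phi_j$ and at what order: one must carefully justify that cross terms $\widetilde P_i \widetilde P_{i'}$ with $i \ne i'$, and terms $\widetilde P_i$ with $i \ne j$ appearing in the quadratic forms, are $o(1)$ or $O(e^{-2\varpi t})$ uniformly, using the separation $\tau_0$ of the velocities $v_i$ from the cut-off speeds $m_i$ and the exponential decay estimates in the Remarks after the definitions of solitons and breathers. This is the step where the $v_2>0$ hypothesis does not yet enter (that will matter in Step 3, for the sign/coercivity and monotonicity), so here the argument is essentially identical to \cite[Lemma 2.13 or its analogue]{key-49}; accordingly I would carry out the substitution and integration by parts explicitly only far enough to exhibit the cancellation of linear terms and the emergence of $H_j$, and refer to \cite{key-49} for the routine localization estimates. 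I expect no genuinely new difficulty in this lemma itself — the new input needed for $v_1<0$ surfaces only when one must prove $H_j$ (or a slightly weakened variant) is almost-monotone and coercive, which is deferred to the subsequent steps.
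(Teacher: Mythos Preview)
Your overall architecture is right --- substitute the three expansions from Lemma~\ref{lem:taylor_1} into the linear combination defining $\mathcal H_j$, use the elliptic equation satisfied by $\widetilde P_j$ to kill the linear-in-$w$ term for $i=j$, and recognize the quadratic form as $H_j$. That part matches the paper.

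The gap is in how you treat the indices $i<j$. Two concrete points:

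\emph{(i) The support claim is false.} You write that ``only $\widetilde P_j$ and $\widetilde P_{j+1},\dots$ meet the support of $\Phi_j$ substantially.'' But $\Phi_j(t,x)=\Psi(x-m_j t)$ tends to $1$ as $x\to-\infty$ and to $0$ as $x\to+\infty$, so the profiles $\widetilde P_i$ with $i<j$ (centered near $v_i t < m_j t$) sit squarely in the region where $\Phi_j\approx 1$, not where it is negligible. Exponential localization of $\widetilde P_i$ alone therefore does nothing to make the contributions $\int \widetilde P_i^2\, w^2\, \Phi_j$ or the residual linear terms small.

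\emph{(ii) Orthogonality does not kill the residual linear terms.} After integration by parts, the linear contribution for $i<j$ reduces (via the elliptic equation for $\widetilde P_i$ with its own parameters $(a_i,b_i)$) to a nonzero multiple of $\int \widetilde P_i\, w$ plus related scalar products. The modulation conditions in Lemma~\ref{lem:mod} are $\int (\widetilde R_l)_x w=0$ and $\int (\widetilde B_k)_1 w=\int (\widetilde B_k)_2 w=0$; none of these forces $\int \widetilde P_i\, w$ to vanish.

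What the paper actually uses for these $i<j$ contributions is the \emph{induction hypothesis} $\mathcal P_{j-1}$ (see \eqref{def:rec}), which provides exactly the missing bounds
\[
\int(w^2+w_x^2+w_{xx}^2)\Phi_{j-1}\le C e^{-2\varpi t}
\quad\text{and}\quad
\sum_{i=1}^{j-1}\left|\int \widetilde P_i\, w\right|\le C e^{-2\varpi t}.
\]
The first controls the quadratic terms carrying $\widetilde P_i$ with $i<j$ (since those are supported where $\Phi_{j-1}\approx 1$), and the second absorbs the residual linear terms. This is precisely why Lemma~\ref{1lem:dl} is stated inside the induction and why the paper's proof sketch explicitly says it ``uses \dots\ the induction assumption for the contributions in the region $x\le m_{j-1}t$.'' Your write-up should invoke $\mathcal P_{j-1}$ at this step rather than orthogonality/localization.
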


\begin{proof}
The proof of the lemma above can be performed as in
\cite[Proposition 2.12]{key-49}: it uses Lemma \ref{lem:taylor_1}, the elliptic equation satisfied by $P_{j}$ or $\widetilde P_{j}$ (we
do not need to make a distinction whether it is a soliton
or a breather here), and the induction assumption for the contributions in the region $x \le m_{j-1}t$.
\end{proof}

\subsubsection*{Step 3: Monotonicity}

Let us first recall some monotonicity properties related to the localized conservations laws.
 \begin{lem}
 \label{mon}
Let $\omega >0$ as small as desired. There exists $T_{3} = T_3(\omega) \ge T_2$
and $C>0$ such that for $t\ge T_{3}$,
\begin{gather}
\sum_{i=1}^{j}M[P_{i}]-M_{j}(t)\ge -Ce^{-2\varpi t}, \label{est:mono_M} \\
\sum_{i=1}^{j}\big(E[P_{i}]+\omega M[P_{i}]\big)-\big(E_{j}(t)+\omega M_{j}(t)\big)\ge -Ce^{-2\varpi t}, \label{est:mono_E} \\
\sum_{i=1}^{j}\big(F[P_{i}]+\omega M[P_{i}]\big)-\big(F_{j}(t)+\omega M_{j}(t)\big)\ge -Ce^{-2\varpi t}. \label{est:mono_F}
\end{gather}
\end{lem}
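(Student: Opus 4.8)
\textbf{Proof plan for Lemma \ref{mon} (monotonicity of localized conservation laws).}

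The plan is to establish the three inequalities by differentiating the localized functionals in time and showing the derivative has a sign up to exponentially small errors. For the mass, compute $\frac{d}{dt} M_j(t) = \frac{d}{dt}\int u^2 \Phi_j$. Using the equation \eqref{mKdV} and integrating by parts, the time derivative produces a bulk term of the form $-\int (3u_x^2 + \text{lower order}) \partial_x \Phi_j \, dx$ plus a term $-m_j \int u^2 \Phi_j'$ coming from the translation of the cut-off. Since $\Psi' \le 0$, we have $\Phi_j' \le 0$, so $-m_j \int u^2 \Phi_j'$ has the sign of $m_j$, which we arranged to be positive (this is exactly why the cut-offs were tuned to move right with $m_j > 0$, using $v_2 > 0$). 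The quadratic-in-derivative terms also come with favorable sign, so modulo the cubic and higher terms one gets $\frac{d}{dt} M_j \le C e^{-2\varpi t}$ after controlling the nonlinear terms. These nonlinear terms are handled by the standard dispersive-estimate trick: on the support of $\Phi_j'$ (a region of exponentially small width at speed $m_j$ squeezed between the centers of $P_j$ and $P_{j+1}$), $u$ is exponentially small because it is close to $P$, which is exponentially small there; the gap $\tau_0$ between $\{v_i\}$ and $\{m_i\}$ gives the exponential rate. Integrating from $t$ to $+\infty$ and using $M_j(t) \to \sum_{i=1}^j M[P_i]$ as $t \to +\infty$ (which follows from \eqref{def:mbH2}, the exponential separation of the objects, and the localization of $\Phi_j$) yields \eqref{est:mono_M}.

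For \eqref{est:mono_E} and \eqref{est:mono_F}, I would apply the same scheme to $E_j + \omega M_j$ and $F_j + \omega M_j$ respectively. The role of the $\omega M_j$ correction is that the raw derivative of $E_j$ (or $F_j$) alone has a bulk term that is \emph{almost} a perfect negative quadratic form in the transition region but not quite — there are indefinite cross terms. Adding $\omega$ times the (manifestly coercive) mass derivative, with $\omega$ small but fixed, completes the square: the combination $-m_j \int (\text{quadratic form in } u, u_x, \dots) \Phi_j'$ becomes a genuine non-negative quantity against $-\Phi_j' \ge 0$, at the cost of a harmless $\omega$-dependent shift. Concretely, one checks that the Pohozaev-type identities arising from differentiating $F_j$ produce a quadratic form whose lowest eigenvalue direction is controlled once a multiple of $u^2$ is added in. The nonlinear (sextic, quartic) terms are again absorbed by the exponential smallness of $u$ on the transition strip. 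Here $\omega$ is taken as small as we like, as stated, so there is no tension with later use.

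The main obstacle — and the real reason this is phrased with a free parameter $\omega > 0$ — is the sign of the leading quadratic form: unlike the pure mass case, the derivatives of $E_j$ and $F_j$ are not automatically dissipative in the moving frame, and one must verify that the defect is always dominated by a small multiple of the mass defect uniformly in the transition region. This is a computation on the symbol of a constant-coefficient quadratic differential form (after freezing coefficients, since $\widetilde P_j$ is exponentially small where $\Phi_j'$ lives), and the only subtlety is keeping track of the $m_j > 0$ hypothesis, which is what makes the whole mechanism work and which we have guaranteed via the tuning of $m_1, \dots, m_{J-1}$ in Step 3's setup. Everything else — the passage to the limit $t \to +\infty$, the control of nonlinear remainders, the choice of $T_3(\omega)$ — is standard and parallels \cite[Section 4.2]{key-49}, so I would only indicate the changes rather than redo the estimates.
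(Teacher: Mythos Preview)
Your overall strategy --- differentiate the localized functionals, show the derivative has a sign up to an exponentially small error, then integrate from $t$ to $+\infty$ and identify the limit --- is indeed the one used in \cite[Lemma 4.11]{key-49}, to which the paper defers. However, there is a real gap in your handling of the nonlinear terms and, correspondingly, in your explanation of why the $\omega M_j$ correction is needed.

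You claim that on the transition strip (where $\Phi_{jx}$ lives) ``$u$ is exponentially small because it is close to $P$, which is exponentially small there''. This is false at this stage of the argument: all you know is $\|u(t)-P(t)\|_{H^2}\to 0$ with \emph{no rate} (the exponential rate is precisely what Proposition~\ref{prop:uniq} is trying to establish). Hence in the transition region $u = P + (u-P)$ is only $o(1)$, not $O(e^{-\varpi t})$. Consequently, a nonlinear term such as $\int u^4 |\Phi_{jx}|$ is not $O(e^{-2\varpi t})$; the best you get is
\[
\int u^4 |\Phi_{jx}| \le C e^{-2\varpi t} + o(1)\cdot \int u^2 |\Phi_{jx}|,
\]
where the $o(1)$ comes from $\|u-P\|_{L^\infty}^2$. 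The second term cannot be thrown into the exponential error; it must be \emph{absorbed} by a positive $\int u^2 |\Phi_{jx}|$ contribution --- and this is exactly what the added $\omega M_j$ supplies (its derivative contains $\frac{m_j}{2}\omega\int u^2|\Phi_{jx}|$). This is the content of the paper's remark that the extra mass is ``related to a lack of control of nonlinear terms far away from the breathers/solitons'', and it is why $T_3$ depends on $\omega$. Your alternative explanation --- that $\omega M_j$ ``completes the square'' for an indefinite quadratic form --- is incorrect: the purely quadratic part of $\frac{d}{dt}E_j$ (and of $\frac{d}{dt}F_j$) against $|\Phi_{jx}|$ is already nonnegative once $\sigma$ is small, so no repair is needed there.

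A minor point: your displayed inequality ``$\frac{d}{dt}M_j \le Ce^{-2\varpi t}$'' has the wrong sign; from the positive terms you correctly identified one obtains $\frac{d}{dt}M_j \ge -Ce^{-2\varpi t}$, which is what integration on $[t,+\infty)$ actually requires for \eqref{est:mono_M}.
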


\begin{proof}
The lemma above may be proved in the same manner as in \cite[Lemma 4.11]{key-49}.
\end{proof}

We emphasize that some extra $L^2$ mass is needed in order to gain monotonicity for $E_j$ or $F_j$: this fact was already noted in \cite{key-2}, and is related to a lack of control of non linear terms far away from the breathers/solitons.

\bigskip

We now turn to the main monotonicity result that we will use.

Let $0<\nu<1$ be close enough to $1$, to be fixed later. We define, for $j=1,\dots,J$, a functional $\mathcal{F}_j$ that is close to the Lyapunov functional $\mathcal H_j$: 
\begin{equation} 
\mathcal{F}_j(t):=F_j(t)+2(b_{j}^2-a_{j}^2)E_j(t)+\nu(a_{j}^2+b_{j}^2)^2M_j(t). 
\end{equation}

The following lemma states the almost-growth of $\mathcal{F}_j$:
\begin{lem}
\label{1F2} There exists $0<\nu<1$ close enough to $1$ such that there exists $T_4 \ge T_2$ and $C>0$ such that for any $t\ge T_4$, \begin{equation} \label{est:1F2}
 \mathcal{F}_j(t)-\sum_{i=1}^{j}F[P_{i}]-2(b_{j}^2-a_{j}^2)\sum_{i=1}^{j}E[P_{i}]-\nu(a_{j}^2+b_{j}^2)^2\sum_{i=1}^{j}M[P_{i}]\le Ce^{-2\varpi t}. \end{equation}
\end{lem}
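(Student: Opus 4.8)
\textbf{Plan for the proof of Lemma \ref{1F2}.}
The functional $\mathcal F_j$ differs from the Lyapunov functional $\mathcal H_j$ only in the mass coefficient: $\mathcal F_j = \mathcal H_j - (1-\nu)(a_j^2+b_j^2)^2 M_j$. So the plan is to combine the monotonicity estimates of Lemma \ref{mon} with this algebraic identity. The natural decomposition is to write
\[ \mathcal F_j(t) = F_j(t) + 2(b_j^2-a_j^2)E_j(t) + \nu(a_j^2+b_j^2)^2 M_j(t), \]
and I want to bound each block from above by the corresponding sum over the $P_i$'s (up to $Ce^{-2\varpi t}$). The subtlety is the sign of the coefficient $2(b_j^2-a_j^2)$ attached to $E_j$: if it is nonnegative we can feed $E_j$ directly into \eqref{est:mono_E}, but if it is negative we must rewrite $2(b_j^2-a_j^2)E_j = -2(a_j^2-b_j^2)E_j$ and we would need a lower bound on $E_j$, which monotonicity does not give. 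This is exactly the reason the auxiliary parameters $\nu_1$ and $m_1$ were tuned in \eqref{nu1}--\eqref{m2def}: the mass we have in reserve, namely the factor $\nu<1$ instead of $1$, must absorb the contribution with the bad sign. I expect this algebraic balancing to be the main obstacle.

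Concretely, first I would treat the easy case $b_j^2 - a_j^2 \ge 0$ (in particular all solitons, where $a_j = 0$, and any breather with $\beta_k \ge \alpha_k$): here $2(b_j^2-a_j^2)E_j$ is handled by applying \eqref{est:mono_E} with a small weight $\omega$, and $\nu(a_j^2+b_j^2)^2 M_j$ and $F_j$ by \eqref{est:mono_M} and \eqref{est:mono_F}, after distributing a tiny bit of mass; the combination gives exactly \eqref{est:1F2}, with $\nu$ simply required to satisfy $0<\nu<1$ and close to $1$ so that the leftover mass budget covers the $\omega$'s. In the hard case $b_j^2 - a_j^2 < 0$ — which, since the velocities are ordered and $v_2>0$, can only happen for $j=1$ (a leftmost breather with $v_1 = \beta_1^2 - 3\alpha_1^2<0$, forcing $b_1^2<3a_1^2$, and in particular it is the $j=1$ case where $m_1$ received special tuning) — I would instead combine $F_1$, $E_1$, $M_1$ using the genuine Lyapunov inequality: the point of Lemma \ref{mon} combined with the sharp coercivity of $\mathcal H_1$ is that $F_1 + 2(b_1^2-a_1^2)E_1 + (a_1^2+b_1^2)^2 M_1$ is essentially monotone, and replacing the mass coefficient $(a_1^2+b_1^2)^2$ by the strictly smaller $\nu(a_1^2+b_1^2)^2$ only helps (decreases the functional) provided $M_1 \ge 0$, which is automatic. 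So in that case \eqref{est:1F2} follows from \eqref{est:mono_M}--\eqref{est:mono_F} applied to the exact Lyapunov combination and then discarding $(1-\nu)(a_1^2+b_1^2)^2 M_1 \ge 0$; the role of \eqref{nu1}--\eqref{m2def} is to guarantee that this works with $\nu$ close enough to $1$ that the accompanying coercivity (needed in later steps, not here) is not destroyed.

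The bookkeeping steps are then: fix $\nu$ close to $1$ satisfying the constraints coming from both cases (it suffices to take $\nu \ge \nu_1$ in the notation of \eqref{nu1}); choose the auxiliary weights $\omega$ in Lemma \ref{mon} small compared to $1-\nu$ and to $\tau_0$; set $T_4 = \max(T_2, T_3(\omega))$; and assemble the three monotonicity estimates. The only genuinely delicate point is making sure, in the case $b_1^2-a_1^2<0$, that the quantity $F_1 + 2(b_1^2-a_1^2)E_1 + \nu(a_1^2+b_1^2)^2 M_1$ is still controlled from above by its soliton/breather counterpart — equivalently that subtracting $(1-\nu)(a_1^2+b_1^2)^2 M_1$ respects the inequality — which is where the positivity of localized mass and the near-$1$ choice of $\nu$ enter. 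Everything else is a direct consequence of Lemma \ref{mon} and the algebra of $\mathcal F_j$ versus $\mathcal H_j$.
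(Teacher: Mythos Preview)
Your treatment of the easy case $b_j^2-a_j^2\ge 0$ is correct and matches the paper. The gap is in the hard case $b_1^2-a_1^2<0$, where your argument does not go through.

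First, the monotonicity of $\mathcal H_1$ that you invoke is \emph{not} available from Lemma \ref{mon}: when $b_1^2-a_1^2<0$, the coefficient of $E_1+\omega M_1$ is negative, and \eqref{est:mono_E} gives only an upper bound on $E_1$, not the lower bound you would need. Coercivity of $H_1$ concerns the quadratic form in $w$, not the monotonicity of $\mathcal H_1$. In fact, in the paper the bound on $\mathcal H_j$ (equation \eqref{1eq:-164}) is obtained \emph{from} Lemma \ref{1F2}, so your route would be circular.

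Second, even if you had $\mathcal H_1(t)\le \sum F[P_i]+2(b_1^2-a_1^2)\sum E[P_i]+(a_1^2+b_1^2)^2\sum M[P_i]+Ce^{-2\varpi t}$, discarding $(1-\nu)(a_1^2+b_1^2)^2 M_1\ge 0$ only yields the same right-hand side, with coefficient $(a_1^2+b_1^2)^2$ in front of $\sum M[P_i]$; you need the \emph{smaller} coefficient $\nu(a_1^2+b_1^2)^2$. Equivalently,
\[
\mathcal F_1(t)-\Big[\text{sum with }\nu\Big]=\big[\mathcal H_1(t)-\text{sum with }1\big]+(1-\nu)(a_1^2+b_1^2)^2\Big(\sum M[P_i]-M_1(t)\Big),
\]
and Lemma \ref{mon} bounds $\sum M[P_i]-M_1(t)$ from below by $-Ce^{-2\varpi t}$, not from above: this term can stay positive and $O(1)$.

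What the paper actually does in this case is go back to the differential inequalities underlying Lemma \ref{mon} (the pointwise lower bounds on $\frac{d}{dt}F_1$, $-\frac{d}{dt}E_1$, $\frac{d}{dt}M_1$ in terms of $\int u_{xxx}^2|\Phi_{1x}|$, $\int u_{xx}^2|\Phi_{1x}|$, $\int u_x^2|\Phi_{1x}|$, $\int u^2|\Phi_{1x}|$), combine them into $\frac{d}{dt}\mathcal F_1$, and then absorb the one negative coefficient (in front of $\int u_{xx}^2|\Phi_{1x}|$) via the interpolation $\int u_{xx}^2|\Phi_{1x}|\lesssim \big(\int u_x^2|\Phi_{1x}|\int u_{xxx}^2|\Phi_{1x}|\big)^{1/2}$, together with Young-type splittings. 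The conditions \eqref{nu1}--\eqref{m2def} and the smallness of $\sigma$ are exactly what make the resulting coefficients nonnegative; they are not merely there to preserve later coercivity.
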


\begin{rem}
We emphasize the factor $\nu<1$ in front of the $M_j$ term, which make it a \emph{weakened} version of the Lyapunov functional $\mathcal H_j$. Therefore, as $M_j$ enjoys strong monotonicity,  the monotonicity of $\mathcal{F}_j$ is a stronger result than merely that
 of $\mathcal{H}_j$. This improvement is needed in order to deal with $\displaystyle \int \widetilde P_{j} w$, see Step 5.
\end{rem}

 \begin{proof}
If $b_{j}^2-a_{j}^2\ge 0$, then Lemma \ref{1F2} is an immediate consequence of Lemma \ref{mon}.
For the rest of the proof, we consider the case $b_{j}^2-a_{j}^2<0$, which can only occur when $j=1$, which we assume for the rest of this proof. Let
\begin{equation}
\nu=\nu_1+\frac{2}{3}(1-\nu_1) <1,
\end{equation}
where $\nu_1$ is defined in \eqref{nu1}.

In the proof of Lemma \ref{mon} (see \cite[Lemma 4.11]{key-49}), there hold the more precise bounds: given $\omega>0$, there exist $T_3' =T_3'(\omega) \ge T_2$ such that for all $t \ge T_3'$
\begin{align}
\frac{d}{dt}F_{j}(t) & \ge -Ce^{-2\varpi t}+\frac{3}{2}\int u_{xxx}^{2}|\Phi_{jx}|+\frac{m_{j}}{2}\int u_{xx}^{2}|\Phi_{jx}| - \omega\int (u_{xx}^{2} + u_{x}^{2} + u^{2}) |\Phi_{jx}|, \\
 -\frac{d}{dt}E_{j}(t) & \ge -Ce^{-2\varpi t}-\frac{3}{2}\int u_{xx}^{2}|\Phi_{jx}| -\frac{m_{j}}{2}\int u_{x}^{2}|\Phi_{jx}| -\omega \int (u_{x}^{2} + u^{2}) |\Phi_{jx}|, \\ 
 \frac{d}{dt}M_{j}(t) & \ge -Ce^{-2\varpi t}+\frac{3}{2}\int u_{x}^{2}|\Phi_{jx}|+\frac{m_{j}}{2}\int u^{2}|\Phi_{jx}| -\omega\int u^{2}|\Phi_{jx}|.%
 \end{align}%
Summing up the right linear combination, we infer:
\begin{align} 
\frac{d}{dt}\mathcal{F}_{j}(t) & \ge -Ce^{-2\varpi t}+\frac{3}{2}\int u_{xxx}^{2}\lvert\Phi_{jx}\rvert \nonumber \\ 
& \quad + \Big(3(b_{j}^{2}-a_{j}^{2})+\frac{m_{j}}{2}-\omega\Big)\int u_{xx}^{2}\lvert\Phi_{jx}\rvert \nonumber \\ 
& \quad + \Big(\frac{3}{2}\nu(a_{j}^{2}+b_{j}^{2})^{2}+m_{j}(b_{j}^{2}-a_{j}^{2})-\omega\Big)\int u_{x}^{2}\lvert\Phi_{jx}\rvert \nonumber \\  
& \quad + \Big(\frac{m_{j}}{2}\nu(a_{j}^{2}+b_{j}^{2})^{2}-\omega\Big)\int u^{2}\lvert\Phi_{jx}\rvert. 
\end{align}
Heuristically, $\omega$ can be neglected, so that the coefficients of the terms in $\int u_{xxx}^{2}\lvert\Phi_{jx}\rvert$, $\int u_{x}^{2}\lvert\Phi_{jx}\rvert$
and $\int u^{2}\lvert\Phi_{jx}\rvert$ are all positive: only the term in $\int u_{xx}^{2}\lvert\Phi_{jx}\rvert$ might be problematic, and will concentrate our efforts.

From the definition of $m_2$ given by \eqref{m2def}, we have that
\begin{equation} \frac{3}{2}\nu(a_{j}^{2}+b_{j}^{2})^{2}+m_{j}(b_{j}^{2}-a_{j}^{2})>\frac{3}{2}\nu'(a_{j}^{2}+b_{j}^{2})^{2}, \end{equation} 
where
\begin{equation}
\nu'=\nu_1+\frac{1-\nu_1}{3}.
\end{equation}

We choose $\omega$ small enough with respect to the previous choice (by choosing $T_4$ large enough) so that 
\begin{equation} 
\frac{3}{2}\nu(a_{j}^{2}+b_{j}^{2})^{2}+m_{j}(b_{j}^{2}-a_{j}^{2})-\omega\ge \frac{3}{2}\nu'(a_{j}^{2}+b_{j}^{2})^{2}. 
\end{equation}
Knowing that $a_{j}^2+b_{j}^2>0$, we may choose $\omega$ even smaller (with respect to $m_{j}$ and $\nu$) so that \begin{equation} \frac{m_{j}}{2}\nu(a_{j}^{2}+b_{j}^{2})^{2}-\omega\ge 0, \end{equation} and \begin{equation} 
3(b_{j}^{2}-a_{j}^{2})+\frac{m_{j}}{2}-\omega\ge 3(b_{j}^{2}-a_{j}^{2}). \end{equation}
In the case when with the chosen values of $m_{j}$, $\nu$ and $\omega$, $3(b_{j}^{2}-a_{j}^{2})+\frac{m_{j}}{2}-\omega$ is positive, the desired conclusion is straightforward by integration. From now on, we place ourselves in the case when 
\begin{equation} 
3(b_{j}^{2}-a_{j}^{2})+\frac{m_{j}}{2}-\omega<0. 
\end{equation}
Now, we want to bound above $\int u_{xx}^{2}\lvert\Phi_{jx}\rvert$. By integration by parts, \begin{equation}\begin{aligned} \int u_{xx}^{2}\lvert\Phi_{jx}\rvert & =-\int u_{x}u_{xxx}\lvert\Phi_{jx}\rvert-\int u_{x}u_{xx}\lvert\Phi_{jxx}\rvert\\  & \le \sqrt{\int u_{x}^{2}\lvert\Phi_{jx}\rvert\int u_{xxx}^{2}\lvert\Phi_{jx}\rvert}+\frac{\sqrt{\sigma}}{2}\sqrt{\int u_{x}^{2}\lvert\Phi_{jx}\rvert\int u_{xx}^{2}\lvert\Phi_{jx}\rvert}, \end{aligned}\end{equation} because $\lvert\Phi_{jxx}\rvert\le \frac{\sqrt{\sigma}}{2}\lvert\Phi_{jx}\rvert$.
We denote: \begin{equation} X:=\sqrt{\int u_{xx}^{2}\lvert\Phi_{jx}\rvert}, \end{equation} and \begin{equation} A:=\sqrt{\int u_{x}^{2}\lvert\Phi_{jx}\rvert\int u_{xxx}^{2}\lvert\Phi_{jx}\rvert}. \end{equation} So, we have that \begin{equation} X^{2}\le A+\varepsilon X, \end{equation} where \begin{equation} \varepsilon:=\frac{\sqrt{\sigma}}{2}\sqrt{\int u_{x}^{2}\lvert\Phi_{jx}\rvert}\le \frac{\sqrt{\sigma}}{2}\lVert u\rVert_{\dot{H^{1}}}, \end{equation} which can be as small as we want if we take $\sigma$ small enough (for a given solution $u$).
We deduce that 
\begin{equation}
 X\le \frac{\varepsilon+\sqrt{\varepsilon^{2}+4A}}{2}\le \varepsilon+\sqrt{A}. 
 \end{equation} 
Thus, 
\begin{equation}
\begin{aligned} \int u_{xx}^{2}\lvert\Phi_{jx}\rvert&\le \Bigg(\frac{\sigma}{4}\sqrt{\int u_{x}^{2}\lvert\Phi_{jx}\rvert}+\sqrt{\sigma}\bigg(\int u_{x}^{2}\lvert\Phi_{jx}\rvert\int u_{xxx}^{2}\lvert\Phi_{jx}\rvert\bigg)^{\frac{1}{4}}\\ &\qquad+\sqrt{\int u_{xxx}^{2}\lvert\Phi_{jx}\rvert}\Bigg)\sqrt{\int u_{x}^{2}\lvert\Phi_{jx}\rvert}. \end{aligned}
\end{equation} 
So, 
\begin{equation}
\begin{aligned} 
\MoveEqLeft\Big(3(b_{j}^{2}-a_{j}^{2})+\frac{m_{j}}{2}-\omega\Big)\int u_{xx}^{2}\lvert\Phi_{jx}\rvert  \ge 3(b_{j}^{2}-a_{j}^{2})\sqrt{\int u_{x}^{2}\lvert\Phi_{jx}\rvert\int u_{xxx}^{2}\lvert\Phi_{jx}\rvert}\\  
& +3(b_{j}^{2}-a_{j}^{2})\sqrt{\sigma}\bigg(\int u_{x}^{2}\lvert\Phi_{jx}\rvert\bigg)^{\frac{3}{4}}\bigg(\int u_{xxx}^{2}\lvert\Phi_{jx}\rvert\bigg)^{\frac{1}{4}}\\  
& +3(b_{j}^{2}-a_{j}^{2})\frac{\sigma}{4}\int u_{x}^{2}\lvert\Phi_{jx}\rvert. 
\end{aligned}
\end{equation}
On the other hand, we have, for a choice of $\nu_{2},\nu_{3}>0$ such that $\nu_{1}+\nu_{2}+\nu_{3}=\nu'$ 
 that 
 \begin{equation}
 \begin{aligned} 
 \MoveEqLeft\frac{3}{2}\int u_{xxx}^{2}\lvert\Phi_{jx}\rvert+\Big(\frac{3}{2}\nu'(a_{j}^{2}+b_{j}^{2})^{2}\Big)\int u_{x}^{2}\lvert\Phi_{jx}\rvert\\  
 &  \ge \frac{3}{2}\nu_{1}\int u_{xxx}^{2}\lvert\Phi_{jx}\rvert+\Big(\frac{3}{2}\nu_{1}(a_{j}^{2}+b_{j}^{2})^{2}\Big)\int u_{x}^{2}\lvert\Phi_{jx}\rvert\\  
 & +\frac{3}{2}(1-\nu_1) \int u_{xxx}^{2}\lvert\Phi_{jx}\rvert+\Big(\frac{3}{2}\nu_{2}(a_{j}^{2}+b_{j}^{2})^{2}\Big)\int u_{x}^{2}\lvert\Phi_{jx}\rvert\\  
 & +\Big(\frac{3}{2}\nu_{3}(a_{j}^{2}+b_{j}^{2})^{2}\Big)\int u_{x}^{2}\lvert\Phi_{jx}\rvert\\  
 & \ge 2\sqrt{\frac{3}{2}\nu_{1}\Big(\frac{3}{2}\nu_{1}(a_{j}^{2}+b_{j}^{2})^{2}\Big)}\sqrt{\int u_{x}^{2}\lvert\Phi_{jx}\rvert\int u_{xxx}^{2}\lvert\Phi_{jx}\rvert}\\  
 & + 4\bigg(\frac{3}{2}(1-\nu_1) \int u_{xxx}^{2}\lvert\Phi_{jx}\rvert\bigg)^{\frac{1}{4}}\bigg(\Big(\frac{1}{2}\nu_{2}(a_{j}^{2}+b_{j}^{2})^{2}\Big)\int u_{x}^{2}\lvert\Phi_{jx}\rvert\bigg)^{\frac{3}{4}}\\  
 & +\Big(\frac{3}{2}\nu_{3}(a_{j}^{2}+b_{j}^{2})^{2}\Big)\int u_{x}^{2}\lvert\Phi_{jx}\rvert\\  
 & \ge 3 \nu_{1}(a_{j}^{2}+b_{j}^{2})\sqrt{\int u_{x}^{2}\lvert\Phi_{jx}\rvert\int u_{xxx}^{2}\lvert\Phi_{jx}\rvert}\\  
 & +2\cdot3^{\frac{1}{4}}(1-\nu_1)^{\frac{1}{4}}\nu_{2}^{\frac{3}{4}}(a_{j}^{2}+b_{j}^{2})^{\frac{3}{2}}\bigg(\int u_{x}^{2}\lvert\Phi_{jx}\rvert\bigg)^{\frac{3}{4}}\bigg(\int u_{xxx}^{2}\lvert\Phi_{jx}\rvert\bigg)^{\frac{1}{4}}\\  
 & +\frac{3}{2}\nu_{3}(a_{j}^{2}+b_{j}^{2})^{2}\int u_{x}^{2}\lvert\Phi_{jx}\rvert. 
 \end{aligned}
 \end{equation}
This is why, we deduce that 
\begin{equation} \label{est:Fend}
\begin{aligned} 
\MoveEqLeft\frac{d}{dt}\mathcal{F}_{j}(t)  \ge -Ce^{-2\varpi t}\\ &+\Big(3(b_{j}^{2}-a_{j}^{2})+3 \nu_{1} (a_{j}^{2}+b_{j}^{2})\Big)\sqrt{\int u_{x}^{2}\lvert\Phi_{jx}\rvert\int u_{xxx}^{2}\lvert\Phi_{jx}\rvert}\\  
& +\Big(3(b_{j}^{2}-a_{j}^{2})\sqrt{\sigma}+2\cdot3^{\frac{1}{4}}(1-\nu_1)^{\frac{1}{4}}\nu_{2}^{\frac{3}{4}}(a_{j}^{2}+b_{j}^{2})^{\frac{3}{2}}\Big)\bigg(\int u_{x}^{2}\lvert\Phi_{jx}\rvert\bigg)^{\frac{3}{4}}\bigg(\int u_{xxx}^{2}\lvert\Phi_{jx}\rvert\bigg)^{\frac{1}{4}}\\  
& +\Big(3(b_{j}^{2}-a_{j}^{2})\frac{\sigma}{4}+\frac{3}{2}\nu_{3}(a_{j}^{2}+b_{j}^{2})^{2}\Big)\int u_{x}^{2}\lvert\Phi_{jx}\rvert. 
\end{aligned}
\end{equation}
To finish, we remark that the coefficient in front of the integrals in \eqref{est:Fend} are all non negative: indeed 
\begin{equation} 
3(b_{j}^{2}-a_{j}^{2})+3\nu_{1}(a_{j}^{2}+b_{j}^{2})\ge 0, 
\end{equation}
by definition of $\nu_1$ given in \eqref{nu1};
 \begin{equation} 
3(b_{j}^{2}-a_{j}^{2})\sqrt{\sigma}+2\cdot3^{\frac{1}{4}} (1-\nu_1)^{\frac{1}{4}}\nu_{2}^{\frac{3}{4}}(a_{j}^{2}+b_{j}^{2})^{\frac{3}{2}}\ge 0, \end{equation} 
 and 
 \begin{equation} 
 3(b_{j}^{2}-a_{j}^{2})\frac{\sigma}{4}+\frac{3}{2}\nu_{3}(a_{j}^{2}+b_{j}^{2})^{2}\ge 0, \end{equation}
 by choosing $\sigma>0$ small enough.

Thus, \begin{equation} \frac{d}{dt}\mathcal{F}_{j}(t)\ge -Ce^{-2\varpi t}. \end{equation} We obtain the desired conclusion by integration. \end{proof}

\subsubsection*{Step 4: Bound from above for $H_{j}(t)$}

\begin{lem}
For any $t\ge T_4$, we have that
\begin{equation}
H_{j}(t)\le Ce^{-2\varpi t}+o \left( \int\big(w^{2}+w_{x}^{2}\big)\Phi_{j} \right)
\end{equation}
\end{lem}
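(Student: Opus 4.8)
The plan is to combine the lemmas of Steps 2 and 3, exploiting the elementary algebraic relation between the Lyapunov functional $\mathcal H_j$ and its weakened variant $\mathcal F_j$. Since $\mathcal H_j$ and $\mathcal F_j$ differ only through the coefficient in front of $M_j$,
\[
\mathcal H_j(t) = \mathcal F_j(t) + (1-\nu)\big(a_j^2+b_j^2\big)^2 M_j(t).
\]
First I would use Lemma \ref{1lem:dl} to express $H_j$ via $\mathcal H_j$:
\[
H_j(t) = \mathcal H_j(t) - \sum_{i=1}^j F[\widetilde P_i] - 2\big(b_j^2-a_j^2\big)\sum_{i=1}^j E[\widetilde P_i] - \big(a_j^2+b_j^2\big)^2 \sum_{i=1}^j M[\widetilde P_i] + O(e^{-2\varpi t}) + o\Big(\int(w^2+w_x^2)\Phi_j\Big),
\]
and observe that each $\widetilde P_i$ is a translate of $P_i$, so by translation invariance of $M$, $E$, $F$ we may replace $\widetilde P_i$ by $P_i$ throughout.

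Substituting the relation between $\mathcal H_j$ and $\mathcal F_j$ and invoking Lemma \ref{1F2} — which bounds $\mathcal F_j(t) - \sum_i F[P_i] - 2(b_j^2-a_j^2)\sum_i E[P_i] - \nu(a_j^2+b_j^2)^2\sum_i M[P_i]$ from above by $Ce^{-2\varpi t}$ — the $F$ and $E$ contributions cancel exactly, while the mass terms combine (since $\nu - 1 = -(1-\nu)$) into $(1-\nu)(a_j^2+b_j^2)^2\big(M_j(t) - \sum_i M[P_i]\big)$. Thus
\[
H_j(t) \le Ce^{-2\varpi t} + (1-\nu)\big(a_j^2+b_j^2\big)^2\Big(M_j(t) - \sum_{i=1}^j M[P_i]\Big) + o\Big(\int(w^2+w_x^2)\Phi_j\Big).
\]
It then remains to apply the \emph{strong} monotonicity of the localized mass, \eqref{est:mono_M}, which gives $M_j(t) - \sum_i M[P_i] \le Ce^{-2\varpi t}$; since $1-\nu>0$ and $a_j^2+b_j^2>0$ this extra term is $\le Ce^{-2\varpi t}$, and the conclusion follows (enlarging $T_4$ if necessary so that all the invoked estimates hold simultaneously).

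There is no real technical obstacle here; the content is conceptual. The point is that $\mathcal H_j$ itself is not almost-monotone when $b_j^2 - a_j^2 < 0$ (the case $j=1$, $v_1<0$), which is exactly why Step 3 establishes almost-monotonicity only for the weakened functional $\mathcal F_j$ with $\nu < 1$. The resulting discrepancy is the harmless term $(1-\nu)(a_j^2+b_j^2)^2 M_j$, absorbed for free because $M_j$ alone enjoys strong monotonicity with no loss of mass on the right-hand side — precisely the improvement highlighted in the remark after Lemma \ref{1F2}. The only things to be careful about are the signs in the linear combination and the translation invariance of the conserved quantities.
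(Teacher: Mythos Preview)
Your proof is correct and follows the paper's argument almost verbatim: the paper combines the mass monotonicity \eqref{est:mono_M} with Lemma \ref{1F2} to obtain the bound \eqref{1eq:-164} on $\mathcal H_j$, then invokes Lemma \ref{1lem:dl} --- exactly your computation in a slightly different order, with the translation invariance $M[\widetilde P_i]=M[P_i]$ etc.\ used implicitly. One minor slip in your closing commentary: $\mathcal H_j$ \emph{is} almost-monotone (it is $\mathcal F_j$ plus a nonnegative multiple of $M_j$, both almost-monotone); the reason the paper insists on $\nu<1$ is not for this step but for Step~5, where the gap $1-\nu>0$ is precisely what makes the scalar product $\int\widetilde P_j w$ survive.
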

\begin{proof}
From Lemma \ref{mon}, we know that for any $t\ge T_{1}$,
\begin{equation}
M_{j}(t)-\sum_{i=1}^{j}M[P_{i}]\le Ce^{-2\varpi t}.
\end{equation}
By summing this fact with the fact from the Lemma \ref{1F2}, we obtain
that for any $t$ large enough: 
\begin{equation}
\mathcal{H}_{j}(t)-\sum_{i=1}^{j}F[P_{i}]-2\big(b_{j}^{2}-a_{j}^{2}\big)\sum_{i=1}^{j}E[P_{i}]
-\big(a_{j}^{2}+b_{j}^{2}\big)^{2}\sum_{i=1}^{j}M[P_{i}] \le  Ce^{-2\varpi t}.\label{1eq:-164} 
\end{equation}

From \eqref{1eq:-164} and Lemma \ref{1lem:dl}, we obtain
the desired conclusion for any $t\ge T_{4}$.
\end{proof}

Let us recall that $H_j$ enjoy a crucial coercivity property:

\begin{prop}[Coercivity of $H_{j}$]
\label{1lem:coerc-1}There exists $\mu>0$, and $T_5 \ge T_4$
such that, for $t \ge T_5$,
\begin{equation}
H_{j}(t)\ge \mu \int ( w_{xx}^2 + w_x^2 + w^2) \Phi_j(t) -\frac{1}{\mu}\bigg(\int\widetilde P_{j} w \sqrt{\Phi_{j} }\bigg)^{2}.\label{1coerc}
\end{equation}
\end{prop}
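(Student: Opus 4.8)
The plan is to reduce the coercivity of $H_j$ to a linear-operator spectral statement for a single breather or soliton, exactly as in \cite[Section 2]{key-49}, then handle the perturbations coming from the cut-off $\Phi_j$ and from the other profiles $\widetilde P_i$, $i \ne j$. Write $\widetilde P_j$ for the $j$-th modulated object and observe that $H_j(t)$ is, up to exponentially small errors and terms supported where $|x - v_j t|$ is large, the quadratic form $\langle \mathcal{L}_j w, w\rangle$ associated with the second variation at $\widetilde P_j$ of the conserved combination $F + 2(b_j^2 - a_j^2) E + (a_j^2+b_j^2)^2 M$, localized by $\Phi_j$. Indeed the three bracketed integrands in \eqref{1eq:-161} are precisely $\tfrac12 w_{xx}^2$ plus the Hessian of $E$ and $M$ evaluated at $\widetilde P_j$, so $H_j$ is a localized version of the linearized Lyapunov functional whose coercivity, modulo the two-dimensional kernel spanned by $(\widetilde P_j)_1, (\widetilde P_j)_2$ (for a breather) or $(\widetilde R_l)_x$ (for a soliton), is established in \cite{key-49}.

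The steps I would carry out, in order, are: (1) Freeze time and translate so that $\widetilde P_j$ is centred at the origin; invoke the coercivity of the linearized operator from \cite{key-49}, which gives $\langle \mathcal{L}_j \eta, \eta\rangle \ge \mu_0 \|\eta\|_{H^2}^2$ for all $\eta$ in the $H^2$-orthogonal complement of $\mathrm{span}\{(\widetilde P_j)_1,(\widetilde P_j)_2\}$ (resp. $(\widetilde R_l)_x$). (2) Use the orthogonality conditions from Lemma \ref{lem:mod}, namely $\int (\widetilde P_j)_i w = 0$, to conclude that $w$ is \emph{almost} in that complement — one still has to replace the $H^2$-orthogonality by the $L^2$-pairings supplied by the modulation, which costs a term controlled by $\big(\int \widetilde P_j w \sqrt{\Phi_j}\big)^2$ after reinserting the cut-off; this is the origin of the last term in \eqref{1coerc}. (3) Reinstate the weight $\Phi_j(t,x)$: since $\Phi_j$ equals $1$ on a neighbourhood of the support of $\widetilde P_j$ and its derivative is exponentially localized near $x = m_j t$, which is at distance $\ge \tau_0 t$ from the centre $v_j t$ of $\widetilde P_j$, all the terms in $H_j$ carrying a factor $\widetilde P_j$, $(\widetilde P_j)_x$, $(\widetilde P_j)_{xx}$ pick up a weight $\Phi_j$ that differs from $1$ only by $O(e^{-c t})$ on the region where $\widetilde P_j$ is non-negligible; meanwhile the purely quadratic-in-$w$ pieces $\tfrac12 \int w_{xx}^2 \Phi_j$ and $\tfrac12 (a_j^2+b_j^2)^2 \int w^2 \Phi_j$ are nonnegative with the weight in place. (4) Combine: away from $\widetilde P_j$ the potential terms vanish and $H_j$ controls $\int (w_{xx}^2 + w^2)\Phi_j$ directly, while near $\widetilde P_j$ the frozen-coefficient coercivity of Step (1)–(2) gives control of $\int (w_{xx}^2 + w_x^2 + w^2)\Phi_j$ up to the advertised corrections; a partition-of-unity / continuity argument glues the two regimes and absorbs the cross terms, yielding \eqref{1coerc} with a possibly smaller $\mu>0$ for $t \ge T_5$ large enough.

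The main obstacle is Step (2)–(4): passing from the clean spectral gap for the \emph{un-localized} linearized operator at a single breather to the localized functional $H_j$ acting on $w$, which is only orthogonal to the modulation directions in the $L^2$ sense and only ``sees'' $\widetilde P_j$ through the weight $\Phi_j$. One must check that the kernel directions of $\mathcal{L}_j$ are themselves exponentially localized near $x = v_j t$ (true, by the decay estimates in the Remarks following the definitions of $R_{c,\kappa}$ and $B_{\alpha,\beta}$), so that multiplying them by $\sqrt{\Phi_j}$ changes the pairings with $w$ by only $O(e^{-ct}\|w\|_{L^2})$; this is exactly what allows the loss to be recorded as the single term $\tfrac1\mu\big(\int \widetilde P_j w \sqrt{\Phi_j}\big)^2$ rather than as a genuine obstruction. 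Since Lemma \ref{lem:mod} kills the $(\widetilde P_j)_1,(\widetilde P_j)_2$ (or $(\widetilde R_l)_x$) components but not the component along $\widetilde P_j$ itself, that surviving component is precisely what the negative term in \eqref{1coerc} is designed to carry, and controlling it is deferred to Step 5 via the improved monotonicity of $\mathcal{F}_j$ in Lemma \ref{1F2}. All remaining manipulations — the integration by parts to pass between $\int w_{xx}^2 \Phi_j$ and the operator form, and the bookkeeping of the $o(\int (w^2+w_x^2)\Phi_j)$ and $O(e^{-2\varpi t})$ errors — are routine and parallel to \cite[Proposition 2.?]{key-49}, so I would cite that source for them rather than reproduce the computation.
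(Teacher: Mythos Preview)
Your approach is essentially the same as the paper's: the authors simply cite \cite[Proposition 4.10, Step 7]{key-49} (and Proposition 2.13 there) and remark that the localized coercivity is obtained by applying the single-object spectral estimate to $w\sqrt{\Phi_j}$, which is exactly the localization-plus-gluing scheme you outline in Steps (1)--(4).

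One small correction to your Step (2): the negative term $\big(\int \widetilde P_j w\sqrt{\Phi_j}\big)^2$ is not the price of switching from $H^2$-orthogonality to the $L^2$-pairings of Lemma \ref{lem:mod}; rather, $\widetilde P_j$ is itself a genuine bad direction of the Hessian that the translation-only modulation does \emph{not} remove (for solitons this is the usual negative eigendirection left over after killing $Q_c'$; for breathers it is the extra direction beyond $B_1,B_2$ appearing in the Alejo--Mu\~noz coercivity). You identify this correctly in your final paragraph, so the argument goes through, but Step (1) as written (coercivity on the complement of the translation directions alone) overstates the available spectral input.
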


\begin{proof}
One can argue as in the proof of Proposition 4.10, Step 7 in \cite{key-49} (see also Proposition 2.13). (We choose to keep the $L^2$ scalar product of $\widetilde P_{j}$ with $w \sqrt{\Phi_{j}}$ because coercivity is derived from the original coercivity (related to the linearization around soliton or breather of the relevant conservation law see \cite{key-49,key-1}) via localization argument on $w \sqrt{\Phi_{j}}$. Obviously, we could have stated coercivity up to the scalar product of $\int \widetilde P_{j} w$).
\end{proof}

Using the above two results, and the induction hypothesis
we prove as in \cite[Step 6]{key-49} that for
$t \ge T_5$,
\begin{equation}
\label{coerc}
\int\big(w^{2}+w_{x}^{2}+w_{xx}^{2}\big)\Phi_{j}\le Ce^{-2\varpi t}+C\bigg(\int\widetilde P_{j}w  \sqrt{\Phi_{j} }\bigg)^{2}.
\end{equation}

\begin{rem}
Here the choice of modulating only by translation gives a writing simplification with respect to \cite{key-49}, where the scalar product only occured when $P_{j}$ was a breather: we do not need to make this distinction now.
\end{rem}

\subsubsection*{Step 5: Bound from above for $\displaystyle \left|\int\widetilde P_{j} w\sqrt{\Phi_{j}}\right|$}

We now show that the seemingly problematic scalar product $\displaystyle \int\widetilde P_{j}w$ is actually quadratic (which the second part of the induction hypothesis $\mathcal P_j$).

\begin{lem}
For any $t \ge T_5$,
\begin{equation}
\label{ps}
\bigg| \int\widetilde P_{j} w\sqrt{\Phi_{j}}\bigg| \le C e^{-2\varpi t}+ C \int\big(w^{2}+w_{x}^{2}\big)\Phi_{j}
\end{equation}
\end{lem}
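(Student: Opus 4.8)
The plan is to exploit the orthogonality conditions from Lemma \ref{lem:mod} to rewrite $\int \widetilde P_j w \sqrt{\Phi_j}$ as an error that is either exponentially small or quadratic in $w$. The point is that $w$ is $L^2$-orthogonal to the generating vectors of all the modulated profiles $\widetilde P_i$: for a soliton we have $\int (\widetilde R_l)_x w = 0$, and for a breather $\int (\widetilde B_k)_1 w = \int (\widetilde B_k)_2 w = 0$. So the first step is to express $\widetilde P_j$ itself — not merely its derivatives — in terms of these generators up to a controlled remainder. For a soliton $\widetilde R_l = \kappa_l Q_{c_l}(x - \cdot)$, one has the elliptic identity relating $Q_{c}$ to $(Q_c)_{xx}$ and $Q_c^3$, and more usefully the scaling relation $\partial_c Q_c = \frac{1}{2c} Q_c + (\text{something}) (Q_c)_x$-type manipulations; but the cleanest route is: $\widetilde R_l$ is not in the span of $(\widetilde R_l)_x$, so we cannot kill $\int \widetilde R_l w$ by orthogonality alone. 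Instead I would differentiate the orthogonality relation $\int (\widetilde R_l)_x(t) w(t) = 0$ in $t$.

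Concretely: differentiating $\int (\widetilde R_l)_x w \,dx = 0$ in time, using the equation $\partial_t w = -\partial_x(w_{xx} + \text{nonlinear terms in } w, \widetilde P) - \sum (\text{modulation terms})$ together with $\partial_t \widetilde R_l = -c_l (\widetilde R_l)_x + y_{0,l}'(t)(\widetilde R_l)_x$ (translation only!), produces an identity in which the leading term is a non-degenerate multiple of $\int \widetilde R_l w$ (coming from the $L^2$-scalar product of $(\widetilde R_l)_x$ with itself after integrating by parts against the linear flow), plus: terms multiplied by $y'_{0,l}(t)$, which by \eqref{mod_der} are $O((\int w^2 \Phi_j)^{1/2}) + O(e^{-\varpi t})$ and hence when multiplied by $\|w\|$ or by $\|(\widetilde R_l)_x\|$ give quadratic-in-$w$ or exponential contributions; terms quadratic in $w$ coming from the nonlinearity expanded around $\widetilde P$; and terms of the form $\int (\text{interaction between distinct }\widetilde P_i) \cdot (\text{localized weight})$ which decay exponentially by the separation of the profiles (distance $\ge \tau_0 t$ roughly) and the induction hypothesis $\mathcal P_{j-1}$ for the part of the region left of $m_{j-1}t$. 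The same scheme applies to a breather, using $\int (\widetilde B_k)_1 w = \int (\widetilde B_k)_2 w = 0$ and the two generator directions; here $\widetilde B_k$ itself lies (modulo exponentially small tails) in the span of $(\widetilde B_k)_1$ and $(\widetilde B_k)_2$, or at least differentiating both relations in time and taking a suitable linear combination isolates $\int \widetilde B_k w$. One must also insert the weight $\sqrt{\Phi_j}$: since $\Phi_j \equiv 1$ near the support of $\widetilde P_j$ up to exponentially small corrections (the transition of $\Psi$ happens between $P_j$ and $P_{j\pm1}$, at distance $\gtrsim \tau_0 t$ from the center of $P_j$), replacing $\sqrt{\Phi_j}$ by $1$ costs only $C e^{-2\varpi t}$, so it suffices to bound $|\int \widetilde P_j w|$.

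The key steps in order: (1) record $\int \widetilde P_j w \sqrt{\Phi_j} = \int \widetilde P_j w + O(e^{-2\varpi t})$ using exponential localization of $\widetilde P_j$ versus the transition zone of $\Phi_j$; (2) for each type of object, differentiate the relevant orthogonality relation(s) in $t$, substitute the equation for $w_t$ and the modulation equations, and isolate the coefficient of $\int \widetilde P_j w$, checking it is bounded away from zero (this is a fixed nonzero Gram-type constant depending only on the shape parameters); (3) estimate all remaining terms: the modulation-parameter terms via \eqref{mod_der} and \eqref{mod_bou}, the genuinely quadratic terms via Cauchy--Schwarz against $\int(w^2 + w_x^2)\Phi_j$, and the profile-interaction terms via exponential decay plus $\mathcal P_{j-1}$; (4) solve for $\int \widetilde P_j w$ to conclude \eqref{ps}.

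The main obstacle I expect is step (2)--(3) bookkeeping for the breather, specifically verifying that after differentiating the two orthogonality conditions one can genuinely recover $\int \widetilde B_k w$ with a uniformly invertible coefficient matrix, rather than only a combination of $\int \widetilde B_k w$ with $\int (\widetilde B_k)_x w$ or higher objects — this is where the precise structure of the breather (the elliptic/Bäcklund-type relations it satisfies, as in \cite{key-1,key-49}) must be used — and, secondarily, ensuring that the term $y'(t) \cdot \int \widetilde P_j w$ produced along the way can be absorbed (it is, since $y'(t)$ is small by \eqref{mod_der}, so it only perturbs the leading coefficient). All the interaction and nonlinear remainders are routine given the exponential separation and the already-established $\mathcal P_{j-1}$, so the crux is purely the linear-algebraic non-degeneracy of the modulation system, which is exactly the content mirrored in \cite[Step 5]{key-49}.
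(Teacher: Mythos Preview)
Your approach has a genuine gap at the very step you flag as the ``crux'': differentiating the orthogonality relations in time does \emph{not} isolate $\int \widetilde P_j w$. Take the soliton case and carry out your computation: writing the equation for $w_t$ and using $(\widetilde R_l)_{xx}+\widetilde R_l^{\,3}=c_l\widetilde R_l$, the time derivative of $\int(\widetilde R_l)_x w=0$ collapses to
\[
y'_{0,l}\Big(\int(\widetilde R_l)_x^2 + O(\|w\|)\Big) = -6\int \widetilde R_l (\widetilde R_l)_x^{\,2}\, w + O\!\left(\int w^2\right) + (\text{interactions}).
\]
This is precisely the modulation equation behind \eqref{mod_der}: the ``non-degenerate coefficient'' you describe sits in front of $y'_{0,l}$, not in front of $\int \widetilde R_l w$, and the residual linear functional is $\int \widetilde R_l (\widetilde R_l)_x^{\,2} w$, which is not a multiple of $\int \widetilde R_l w$. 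The breather case behaves the same way (and $\widetilde B_k$ does not lie in $\mathrm{span}\{(\widetilde B_k)_1,(\widetilde B_k)_2\}$; only $(\widetilde B_k)_x$ does). So no amount of bookkeeping in step~(3) will rescue step~(2): the scheme recovers the modulation equations, not the mass-type scalar product you need.

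The paper obtains \eqref{ps} by an entirely different mechanism, and this is in fact the reason the weakened functional $\mathcal F_j$ (with $\nu<1$) was introduced. The linear term in the expansion of $M_j$ is exactly $\sum_{i\le j}\int\widetilde P_i w$, so almost-monotonicity of $M_j$ \eqref{est:mono_M} combined with \eqref{1eq:-155} gives the upper bound $\sum_{i\le j}\int\widetilde P_i w\le Ce^{-2\varpi t}$. For the opposite inequality, the Lyapunov $\mathcal H_j$ is useless because its linear part vanishes identically (by the elliptic equation for $\widetilde P_j$); but in $\mathcal F_j=\mathcal H_j-(1-\nu)(a_j^2+b_j^2)^2 M_j$ the linear part is $-(1-\nu)(a_j^2+b_j^2)^2\sum_{i\le j}\int\widetilde P_i w$, and the almost-monotonicity \eqref{est:1F2} then yields $-(1-\nu)(a_j^2+b_j^2)^2\sum_{i\le j}\int\widetilde P_i w\le Ce^{-2\varpi t}+C\int(w^2+w_x^2)\Phi_j$. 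The two one-sided bounds together, plus $\mathcal P_{j-1}$ to peel off $i\le j-1$ and the localization argument you correctly identify in your step~(1), give \eqref{ps}.
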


\begin{proof}
The proof follows the lines of that of Step 7 of Proposition 4.10 in \cite[Section 4.2]{key-49}, and we only sketch it.

First we observe that it is enough to prove the same bound on 
\[ \left| \sum_{i=1}^{j}\int\widetilde P_i w \right| \]
because the induction hypothesis $\mathcal P_{j-1}$ \eqref{def:rec} takes care of the terms $i \le j-1$, and the localization $\sqrt{\Phi_j}$ causes an error of size $O(e^{-2\varpi t})$. Then, the idea is to go back to Lemma \ref{mon}, and work on $M_j$ and on $E_j, F_j$ separately: both make the scalar products $\int\widetilde P_i w$ appear, but with opposite signs.

On the one hand, due to \eqref{1eq:-155} and \eqref{est:mono_M}, we may deduce that
from Lemma \ref{mon} and Lemma \ref{1lem:dl} that
\begin{equation} \label{est:scal_1}
\sum_{i=1}^{j}\int\widetilde P_{i} w\le Ce^{-2\varpi t}.
\end{equation}

On the other hand, due to \eqref{1eq:-156}-\eqref{1eq:-157}, and using  the elliptic equation satisfied by $\widetilde P_{j}$ and the induction assumption, we may infer that for $t\ge T_{5}$,
\begin{align}
\MoveEqLeft-(1-\nu)(a_{j}^{2}+b_{j}^{2})^{2}\sum_{i=1}^{j}\int\widetilde P_{i }w\\ 
& =O(e^{-2\varpi t})+O\left(\int(w^{2}+w_{x}^{2})\Phi_{j}\right)\\
 & +\mathcal{F}_{j}(t)-\sum_{i=1}^{j}\left(F[P_{i}]+2(b_{j}^{2}-a_{j}^{2})E[P_{i}]+\nu(a_{j}^{2}+b_{j}^{2})^{2}M[P_{i}]\right).
\end{align}
Observe the factor $1-\nu >0$ that we placed: this make the functional $\mathcal F_j$ appear, and not merely the Lyapunov $\mathcal H_j$ (for which the linear terms cancel). Therefore, we are now in position of using the monotonicity \eqref{est:1F2}, and we obtain that
\begin{equation} \label{est:scal_2}
-(1-\nu)(a_{j}^{2}+b_{j}^{2})^{2}\sum_{i=1}^{j}\int\widetilde P_{i}w\le Ce^{-2\varpi t}+C\int(w^{2}+w_{x}^{2})\Phi_{j}.
\end{equation}
Estimates \eqref{est:scal_1} and \eqref{est:scal_2} yield together that
\begin{equation}
\left| \sum_{i=1}^{j}\int\widetilde P_{i} w \right|\le Ce^{-2\varpi t}+C\int(w^{2}+w_{x}^{2})\Phi_{j},
\end{equation}
which gives the desired conclusion.
\end{proof}

\subsubsection*{Step 6: Conclusion}

From \eqref{coerc} and \eqref{ps}, we deduce that for all $t \ge T_5$,
\begin{equation}
\int\big(w^2+w_x^2+w_{xx}^2\big)\Phi_j\le Ce^{-2\varpi t}.
\end{equation}
 Plugging this in \eqref{ps}, an immediate localization argument shows that
\[ \left| \int \widetilde P_{j} w \right| \le Ce^{-2\varpi t}. \]
Using also $\mathcal{P}_{j-1}$, this proves $\mathcal{P}_j$ \eqref{def:rec}, and the induction is complete.

\bigskip

Thus, for $j=J$, we get
\begin{equation}
\forall t \ge T_5, \quad \lVert w(t)\rVert_{H^2}\le Ce^{-\varpi t}.
\end{equation}
Now, recall that from $\| u(t) - P(t) \|_{H^2} \to 0$ and \eqref{mod_bou}, there hold
\begin{equation}
\lvert y_{1,k}(t)\rvert+\lvert y_{2,k}(t)\rvert+\lvert y_{0,l}(t)\rvert \to 0 \quad \text{as } t \to +\infty.
\end{equation}
Then, by integration from \eqref{mod_der} on $[t,+\infty)$ that for any $t \ge T_5$, 
\begin{equation}
\lvert y_{1,k}(t)\rvert+\lvert y_{2,k}(t)\rvert+\lvert y_{0,l}(t)\rvert \le Ce^{-\varpi t}.
\end{equation}

So, by the mean-value theorem,
\begin{equation}\begin{aligned}
\| u(t) - P(t) \rVert_{H^2}&\le \lVert w(t)\rVert_{H^2}+\lVert \widetilde P (t)-P(t)\rVert_{H^2}\\
&\le\lVert w(t)\rVert_{H^2}+\sum_{k=1}^K\big(\lvert y_{1,k}(t)\rvert+\lvert y_{2,k}(t)\rvert\big)
+\sum_{l=1}^L\lvert y_{0,l}(t)\rvert\\
&\le Ce^{-\varpi t}.
\end{aligned}\end{equation}
This concludes the proof of Proposition \ref{prop:uniq}.

\subsection{Proof of Theorem \ref{thm:uniq}}

Let us recall the uniqueness result of \cite{key-49} in the class of super-polynomial convergence to the profile.

\begin{prop}
\label{1lem:polyn}
Given breathers \eqref{br} and solitons \eqref{sol},
whose velocities satisfy \eqref{v_dist}, let $P$ be the sum of the considered solitons and breathers given
in \eqref{sum}.

There exists $N>0$ large enough such that the multi-breather from \cite{key-49}
$p \in \q C([T^*,+\infty),H^2)$ associated to $P$ is the \emph{unique} solution $u \in \mathcal{C}([T_0,+\infty),H^2(\mathbb{R}))$ of \eqref{mKdV} such that
\begin{equation}\begin{aligned}
\lVert u(t)-P(t) \rVert_{H^2} = O \bigg( \frac{1}{t^N} \bigg), \qquad \text{as } \ t \rightarrow +\infty.
\end{aligned}\end{equation}
\end{prop}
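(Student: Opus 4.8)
The plan is to turn the statement into a \emph{rigidity} result and to exploit that the multi-breather $p$ of \cite{key-49} is itself an admissible competitor: since it satisfies $\|p(t)-P(t)\|_{H^2}\le Ce^{-\theta t}=O(t^{-N})$ for \emph{every} $N$, it belongs to the class in question, and it suffices to show that any $u\in\q C([T_0,+\infty),H^2)$ solving \eqref{mKdV} with $\|u(t)-P(t)\|_{H^2}=O(t^{-N})$ must coincide with $p$. I would use the very way $p$ is produced in \cite{key-49}: as the strong limit of the approximate solutions $u_n$ underlying that construction, where $u_n$ solves \eqref{mKdV} \emph{backward} from a time $S_n\to+\infty$ with final datum $u_n(S_n)=P(S_n)$, enjoying the uniform bound $\|u_n(t)-P(t)\|_{H^2}\le Ce^{-\theta t}$ and converging to $p$ on compact time intervals. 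Comparing $u$ with $u_n$ and letting $n\to\infty$ will give $u=p$, \emph{provided} the difference cannot grow too fast as one integrates backward from $S_n$ down to $T_0$.

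Set $z_n:=u-u_n$ on $[T_0,S_n]$. At the final time the hypothesis gives $\|z_n(S_n)\|_{H^2}=\|u(S_n)-P(S_n)\|_{H^2}\le CS_n^{-N}$, while $z_n$ solves the homogeneous difference equation
\[ \partial_t z_n+\partial_x\big(\partial_x^2 z_n+u^3-u_n^3\big)=0,\qquad u^3-u_n^3=\big(u^2+uu_n+u_n^2\big)z_n, \]
a genuine linearization with \emph{no} inhomogeneous profile error, whose coefficients are $O(1)$ near the $J$ moving objects and exponentially small away from them. The core of the proof is a \emph{weighted energy estimate with at most polynomial growth}: there exist $Q>0$ and a norm $\|\cdot\|_Y$ equivalent to $\|\cdot\|_{H^2}$ such that
\[ \forall\, t\in[T_0,S_n],\qquad \|z_n(t)\|_Y\le C\,(1+S_n-t)^{Q}\,\|z_n(S_n)\|_Y. \]
To obtain it I would first modulate $u$ by translations, as in Lemma \ref{lem:mod}, so that the components of $z_n$ along the neutral directions $\partial_x\widetilde P_j$ and the breather directions are absorbed into $\q C^1$ parameters, leaving a remainder on which the localized Lyapunov functional of Step~2 is coercive (Proposition \ref{1lem:coerc-1}). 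One then runs the monotonicity machinery of Lemma \ref{mon}, applied to $u$ and to $u_n$ separately and subtracted, so that $z_n$ enters quadratically through the coercive form while the time derivative reduces to flux terms plus controlled linear terms. Crucially, the flux terms $\int z_n^2|\Phi_{jx}|$, $\int (z_n)_x^2|\Phi_{jx}|$, \dots\ are supported in the transition regions $x\approx m_jt$, which stay at distance $\gtrsim\tau_0 t$ from every soliton and breather, so they are $O(t^{-2N})$ and need carry \emph{no favourable sign}. This is exactly the point at which the velocity hypothesis $v_2>0$ of Proposition \ref{prop:uniq} becomes unnecessary: here we do not ask the monotonicity to \emph{generate} decay, only to forbid more than polynomial growth, and for that the cut-off speeds $m_j$ may be of either sign.

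Granting the weighted estimate, the conclusion is quick. Evaluating at $t=T_0$ gives $\|z_n(T_0)\|_Y\le C(1+S_n)^{Q}S_n^{-N}$, which tends to $0$ as $n\to\infty$ as soon as $N>Q$; this fixes the ``$N$ large enough'' of the statement. Since $u_n(T_0)\to p(T_0)$ in $H^2$, we deduce $u(T_0)=p(T_0)$, and the well-posedness of \eqref{mKdV} in $H^2$ recalled in the introduction forces $u\equiv p$. I expect the main obstacle to be precisely the weighted energy estimate, namely ruling out exponential-in-time growth of the linearized difference uniformly in $n$: this is where the coercive Lyapunov functional, the orthogonality produced by modulation, and the almost-monotonicity must be combined with care. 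The delicate contributions are the bulk quadratic terms pairing $z_n$ with the profile near each object --- governed by the spectral coercivity of $\mathcal{H}_{j}$ rather than treated as errors --- together with the cross terms involving the two breather directions and the interpolation of the $\int (z_n)_{xx}^2$-flux, exactly as in the proof of Lemma \ref{1F2}; it is the accumulation of these over the $J$ objects, with possibly several negative velocities, that makes the growth exponent $Q$, and hence the threshold on $N$, explicit.
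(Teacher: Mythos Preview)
The paper does not prove this proposition: it is merely \emph{recalled} from \cite{key-49} and then combined with Proposition~\ref{prop:uniq} to obtain Theorem~\ref{thm:uniq}. So there is no ``paper's own proof'' to compare against; what you have written is a sketch of what a proof \emph{should} look like, and your overall strategy --- compare $u$ with the approximating sequence $u_n$ (or with $p$ directly), use smallness of $z_n(S_n)$, and propagate backward with controlled growth --- is indeed the standard route (Martel for (gKdV), and presumably \cite{key-49} here).

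That said, the heart of your sketch, the ``weighted energy estimate with at most polynomial growth'', is not correctly justified as written, and the two mechanisms you invoke do not combine the way you suggest.

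First, ``running the monotonicity machinery of Lemma~\ref{mon} applied to $u$ and to $u_n$ separately and subtracted'' does \emph{not} make $z_n$ enter quadratically. The functionals $M_j,E_j,F_j$ are quadratic (or higher) in the solution, so $M_j[u]-M_j[u_n]=\int(u+u_n)z_n\,\Phi_j$ is \emph{linear} in $z_n$ with an $O(1)$ coefficient, and similarly for $E_j,F_j$; subtracting does not produce the coercive quadratic form $H_j$ of Lemma~\ref{1lem:dl}. To get a quadratic form in $z_n$ one must work with the second variation directly --- e.g.\ a functional of the type $H_j[z_n]$ built around the (modulated) profile --- and differentiate \emph{that} in time using the equation for $z_n$.

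Second, your bound ``flux terms are $O(t^{-2N})$'' uses the a~priori triangle-inequality estimate $\|z_n(t)\|_{H^2}\le Ct^{-N}$, but an \emph{additive} control of the derivative, $|\tfrac{d}{dt}\mathcal L(z_n)|\le Ct^{-2N}$, only yields $\mathcal L(z_n(T_0))\le \mathcal L(z_n(S_n))+CT_0^{-2N+1}$, and the second term does not tend to $0$ as $S_n\to\infty$. What is needed is a \emph{multiplicative} inequality
\[
\Big|\tfrac{d}{dt}\mathcal L(z_n)\Big|\le C\big(\|z_n(t)\|_{H^2}+e^{-\gamma t}\big)\,\mathcal L(z_n),
\]
obtained after modulation and coercivity, where the factor $\|z_n(t)\|_{H^2}$ (bounded by $Ct^{-N}$) comes from the cubic nonlinearity and from the modulation derivatives, and the $e^{-\gamma t}$ from profile interactions. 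Then Gronwall with an integrable coefficient (any $N\ge 2$ suffices) gives a \emph{bounded} --- hence a fortiori polynomial --- growth factor, and your concluding paragraph goes through. This is also why no sign condition on the $m_j$ is required: the localisation terms are absorbed multiplicatively, not via monotonicity.
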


Theorem \ref{thm:uniq} is then an immediate consequence of Propositions \ref{1lem:polyn} and \ref{prop:uniq}.

\bibliography{bib}
\bibliographystyle{siam}

\end{document}